\newtheorem{thm}{Theorem}[section]
\newtheorem{prp}[thm]{Proposition}
\newtheorem{lem}[thm]{Lemma}
\newtheorem{cor}[thm]{Corollary}
\theoremstyle{definition}
\newtheorem{dfn}{Definition}[section]
\newtheorem{rem}[dfn]{Remark}
\newcommand{\st}{:\;}
\def\R{{\mathbb R}}%
\def\N{{\mathbb N}}%
\renewcommand{\phi}{\varphi}
\def\e{\varepsilon}
\providecommand{\parenth}[1]{\left(#1\right)}%
\providecommand{\braces}[1]{\left\{#1\right\}}%
\newcommand{\iprod}[2]{\left\langle#1,#2\right\rangle}%
\newcommand{\ball}[1]{\mathbf{B}^{#1}}
\newcommand{\norm}[1]{\left\|#1\right\|}
\newcommand{\enorm}[1]{\left|#1\right|}
\providecommand{\card}[1]{\lvert#1\rvert}%
\newcommand{\dist}[2]{\operatorname{dist}\!\left( #1, #2 \right)}%
\newcommand{\Href}[2]{\hyperref[#2]{#1~\ref{#2}}}
\DeclareMathOperator{\mconame}{\delta}
\DeclareMathOperator{\mglname}{\varrho}
\newcommand{\rmglx}[1]{\rho^{-1}_{X}\!\!\!\:\left( #1 \right)}%
\newcommand{\mcox}[1]{\mconame\nolimits_{X}\!\left( #1 \right)}%
\newcommand{\mglx}[1]{\mglname\nolimits_{X}\!\!\!\:\left( #1 \right)}%
\newcommand{\zetap}[1]{\zeta^{+}_{X}\!\left( #1 \right)}%
\newcommand{\zetam}[1]{\zeta^{-}_{X}\!\left( #1 \right)}%
\title{No-dimensional Helly's theorem in uniformly convex Banach spaces}
\author{Grigory Ivanov\address{Grigory Ivanov: 
Pontifícia Universidade Cat\'olica do Rio de Janeiro \\
Departamento de Matematica,
Rua Marquês de São Vicente, 225\\
Edif{\'i}cio Cardeal Leme, sala 862,
22451-900 G{\'a}vea, Rio de Janeiro, Brazil}
\email{grimivanov@gmail.com}}
\thanks{The author is supported by Projeto Paz and Coordenacao de Aperfeicoamento de Pessoal de Nivel Superior - Brasil (CAPES) - 23038.015548/2016-06}
\subjclass[2020]{52A05 (primary),52A35}
\keywords{ Helly-type result, Colorful Helly theorem, uniformly convex Banach space}
\date{\today}
\begin{document}
\begin{abstract}
We study the ``no-dimensional'' analogue of Helly's theorem in Banach spaces. Specifically, we obtain the following no-dimensional Helly-type results for uniformly convex Banach spaces: Helly's theorem, fractional Helly's theorem, colorful Helly's theorem, and colorful fractional Helly's theorem. 

The combinatorial part of the proofs for these Helly-type results is identical to the Euclidean case as presented in \cite{adiprasito2020theorems}. The primary difference lies in the use of a certain geometric inequality in place of the Pythagorean theorem. This inequality can be explicitly expressed in terms of the modulus of convexity of a Banach space.
\end{abstract}

\maketitle
\section{Introduction}
Helly's theorem \cite{helly1923mengen} is a classical result in convex geometry. It states that if the intersection of any subfamily of a finite family of convex sets in $\R^d$, consisting of at most $d + 1$ sets, contains a common point, then all the sets in the family share at least one common point. This fundamental result has seen numerous generalizations and extensions; for an overview, we refer to the recent survey \cite{barany2022helly}. In this note, we study a no-dimensional version of Helly's theorem in Banach spaces, motivated by the groundbreaking paper \cite{adiprasito2020theorems}, where the following no-dimensional version of Helly's theorem in the Euclidean case was introduced:

Throughout this paper, $[n]$ represents the set $\{1, \dots, n\}$ for some natural number $n$, and $\dist{a}{S}$ denotes the distance between a point $a$ and a set $S$ in the underlying normed space. We use $\ball{}$ to denote the unit ball centered at the origin in the underlying normed space. The size of a family of sets $\mathcal{F}$ is denoted as $\card{\mathcal{F}}$.

\begin{prp}[No-dimensional Euclidean Helly's theorem] \label{prp:no-dim_Helly}
Fix $k \in [n]$, and set $r_k\!\parenth{\R^d} = \frac{1}{\sqrt{k}}$. The sequence $r_k\!\parenth{\R^d}$ satisfies the following property: if $K_1,\ldots,K_n$ are convex sets in $\R^d$ such that the Euclidean unit ball $\ball{}$ intersects $\bigcap\limits_{j \in J}K_j$ for every $J\subset [n]$ with $\card{J} = k$, then there exists a point $x \in \R^d$ such that
\[
\dist{x}{K_i} \leq r_k \! \parenth{\R^d} \quad \text{for all} \quad i \in [n].
\]
\end{prp}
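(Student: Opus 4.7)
The plan is to construct a single point $x^\ast$ from the family $\{K_i\}$ using metric projections, and to bound $\dist{x^\ast}{K_i}$ uniformly. Let $p_i$ denote the closest point of $K_i$ to the origin; applying the hypothesis to any $k$-subset that contains $i$ forces $K_i \cap \ball{} \neq \emptyset$, so $\norm{p_i} \leq 1$. The variational characterization of the projection says that $\iprod{p_i}{z - p_i} \geq 0$ for every $z \in K_i$, equivalently $\iprod{p_i}{z} \geq \norm{p_i}^2$. The candidate point $x^\ast$ is taken to be the metric projection of the origin onto the convex polytope $\mathrm{conv}(p_1, \ldots, p_n)$. Writing $x^\ast = \sum_{i \in I} \lambda_i p_i$ with minimal support $I$ and $\lambda_i > 0$, the Karush--Kuhn--Tucker optimality conditions give
\[
\iprod{x^\ast}{p_i} = \norm{x^\ast}^2 \text{ for all } i \in I, \qquad \iprod{x^\ast}{p_i} \geq \norm{x^\ast}^2 \text{ for all } i \in [n].
\]

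For $i \in I$, the orthogonality $\iprod{x^\ast}{p_i - x^\ast} = 0$ combined with the Pythagorean theorem yields $\norm{p_i - x^\ast}^2 = \norm{p_i}^2 - \norm{x^\ast}^2$, and the analogous inequality $\norm{p_i - x^\ast}^2 \leq \norm{p_i}^2 - \norm{x^\ast}^2$ for general $i \in [n]$. This Pythagorean step is precisely the ingredient that, in the Banach setting, must be replaced by a modulus-of-convexity inequality. Now the $k$-wise hypothesis enters: picking a $k$-subset $J$ adapted to the support $I$ and a common point $y \in \ball{} \cap \bigcap_{j \in J} K_j$, one has $\iprod{p_j}{y} \geq \norm{p_j}^2$ for each $j \in J$. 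Summing these $k$ inequalities, substituting the Pythagorean decomposition, and applying Cauchy--Schwarz with $\norm{y} \leq 1$ should produce a bound of the form $\norm{p_i - x^\ast}^2 \leq 1/k$ that holds uniformly in $i \in [n]$. Since $p_i \in K_i$, this yields $\dist{x^\ast}{K_i} \leq \norm{x^\ast - p_i} \leq 1/\sqrt{k}$, as required.

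The main obstacle is the quantitative extraction of the factor $1/k$ in the step above: a naive averaging of the $k$ Helly inequalities produces only a constant-order (essentially $1/4$) bound on a typical $\norm{p_j - x^\ast}^2$. The way to squeeze out the sharper $1/\sqrt{k}$ rate is to exploit the orthogonality $\iprod{x^\ast}{p_j - x^\ast} = 0$, so that the $k$ Helly constraints add up in squared norm rather than suffering the usual triangle-inequality loss. Managing the case distinction between $\card{I} \leq k$ and $\card{I} > k$ --- that is, choosing the right $k$-subset $J$ (a superset of $I$ in the first case, a subset of $I$ in the second) --- is the delicate combinatorial core of the argument, and this is exactly the point at which the dimension-free constant $1/\sqrt{k}$ emerges.
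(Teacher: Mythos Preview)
Your candidate point $x^\ast$ is not the right one, and a simple example shows the bound $\dist{x^\ast}{K_i}\le 1/\sqrt{k}$ is false in general. Take $K_1=\cdots=K_{n-1}=\R^d$ and $K_n=\{e_1\}$, a single point on the unit sphere. Then $p_1=\cdots=p_{n-1}=0$ and $p_n=e_1$, so $\mathrm{conv}(p_1,\dots,p_n)=[0,e_1]$ and $x^\ast=0$. But $\dist{x^\ast}{K_n}=1$, which exceeds $1/\sqrt{k}$ for every $k\ge 2$, even though the hypothesis is satisfied for every $k$ (each $k$-wise intersection is either $\R^d$ or $\{e_1\}$, and both meet $\ball{}$). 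The structural reason is that the individual projections $p_i$ carry no information about the $k$-wise intersections; your construction never actually uses the hypothesis in a way that couples different sets, so no choice of $J$ can recover the factor $1/k$. In the example, the support $I$ is a singleton with $p_i=0$, all the KKT and Pythagorean identities become $0=0$, and the ``delicate combinatorial core'' you describe has nothing to act on.

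The paper's argument is completely different in shape: it is a contradiction/greedy construction rather than a one-shot candidate point. Assuming that every point $p$ is far (distance $>1/\sqrt{k}$) from some set in the family, one inductively builds $K_1,\dots,K_j$ so that $\dist{0}{\bigcap_{i\le j}K_i}>\sqrt{j}/\sqrt{k}$. The inductive step uses the Euclidean lemma that if $p$ is the nearest point from $0$ to a convex set $K$ with $\|p\|>\rho_1$ and $\dist{p}{L}>\rho_2$, then $\dist{0}{K\cap L}>\sqrt{\rho_1^2+\rho_2^2}$; this is where the Pythagorean theorem (and, in the Banach setting, the modulus of convexity) enters. At $j=k$ one obtains a $k$-tuple whose intersection misses $\ball{}$, contradicting the hypothesis. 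If you want a direct-construction proof, you would need a candidate that depends on the $k$-wise intersections, not just on the individual nearest points $p_i$.
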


The particular dimension $d$ is not essential in this result; one could consider the case of an infinite-dimensional Hilbert space from the outset.

Our main result is an extension of this theorem to the case of uniformly convex Banach spaces, where we bound the distance between a point and the sets in terms of the modulus of convexity of the Banach space. For clarity, we state our main result here for the case of a super-reflexive Banach space. The interrelations between super-reflexive and uniformly convex Banach spaces will be discussed in \Href{Section}{sec:moduli_Banach_spaces}.

The following is the main result of the paper.

\begin{thm}[No-dimensional Helly's theorem]\label{thm:no-dim_Helly_Banach}
Fix $k \in [n]$. Let $X$ be a super-reflexive Banach space. There exist a strictly positive constant $C_r$ and $w \in (0, 1/2]$ such that the sequence $r_k(X) = C_r k^{-w}$ satisfies the following property:

If $K_1,\ldots,K_n$ are convex sets in $X$ such that the unit ball $\ball{}$ intersects $\bigcap\limits_{j \in J}K_j$ for every $J\subset [n]$ with $\card{J} = k$, then there exists a point $x \in X$ such that
\[
\dist{x}{K_i} \leq r_k (X) \quad \text{for all} \quad i \in [n].
\]
\end{thm}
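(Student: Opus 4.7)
My plan is to mimic the proof of \Href{Proposition}{prp:no-dim_Helly} from the Euclidean setting by Adiprasito et al., isolating the single place where Hilbert-space geometry enters and replacing that ingredient with a Banach-space substitute driven by the modulus of convexity; the abstract explicitly advertises this route. As a preliminary step, by Pisier's renorming theorem a super-reflexive space admits an equivalent norm whose modulus of convexity satisfies $\delta_X(\varepsilon) \geq c\,\varepsilon^q$ for some constants $c > 0$ and $q \geq 2$. I would pass to such an equivalent norm at the outset and absorb the Banach--Mazur distance between the two norms into the final constant $C_r$.

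\textbf{The Pythagorean surrogate.} The only geometric fact used in the Euclidean argument beyond convexity is: if $p$ is the metric projection of $x$ onto a closed convex set $K$, then $\|x - a\|^2 \geq \|x - p\|^2 + \|a - p\|^2$ for every $a \in K$. The key task is to prove the Banach-space analogue
\[
    \|x - a\|^q \geq \|x - p\|^q + \alpha\,\|a - p\|^q \qquad (a \in K),
\]
with $\alpha > 0$ depending only on $c$ and $q$. This should be established via the variational characterization of the metric projection in terms of the duality map, together with a $q$-power inequality obtained from the power-type modulus (of Xu--Z\u{a}linescu type). Uniform convexity ensures that the metric projection onto a closed convex set exists and is unique, which legitimizes the notation in the first place.

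\textbf{Combinatorial argument, essentially verbatim.} For each $J \in \binom{[n]}{k}$ the hypothesis furnishes a point $a_J \in \ball{} \cap \bigcap_{j \in J} K_j$. I would construct the candidate point $x$ exactly as in the Euclidean proof (a suitable convex combination of the $a_J$'s), let $p_i$ denote the metric projection of $x$ onto $K_i$, and apply the surrogate inequality to the pairs $(x, a_J)$ for each $J \ni i$. Averaging over the $\binom{n-1}{k-1}$ sets $J$ containing $i$ and using $\|a_J\| \leq 1$ produces an estimate of the form $\alpha\,\|x - p_i\|^q \leq C_1/k$, from which $\dist{x}{K_i} \leq (C_1/\alpha)^{1/q}\,k^{-1/q}$ follows. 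Setting $w = 1/q \in (0, 1/2]$ and $C_r = (C_1/\alpha)^{1/q}$ completes the proof.

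\textbf{Where the difficulty lies.} The entirety of the novelty is concentrated in the Pythagorean surrogate: identifying the right inequality, proving it with an explicit constant $\alpha$ in terms of $c$ and $q$, and verifying that replacing squares by $q$-th powers still plugs cleanly into the Adiprasito et al.\ combinatorial argument. Everything else is an exact transcription of the Euclidean proof, and the exponent $w = 1/q$ recovers the Hilbert-space rate $1/\sqrt{k}$ in the limiting case $q = 2$.
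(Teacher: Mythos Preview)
Your first two ingredients---the Pisier renorming and a power-type Pythagorean surrogate tied to the modulus of convexity---are the right ones and match the paper's strategy (the paper phrases the surrogate via the function $\zeta^{-}_X$ rather than an Xu--Z\u{a}linescu $q$-power inequality, but these are equivalent in spirit). The problem is your third step.

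You write that the Adiprasito et al.\ proof constructs the candidate point $x$ as ``a suitable convex combination of the $a_J$'s'' and then averages the surrogate inequality over the $\binom{n-1}{k-1}$ index sets $J\ni i$. That is not what the Euclidean proof does, and the averaging you describe does not produce a $1/k$ bound. Concretely: from $\|x-a_J\|^q\geq\|x-p_i\|^q$ for $J\ni i$ you get
\[
\|x-p_i\|^q\;\leq\;\frac{1}{\binom{n-1}{k-1}}\sum_{J\ni i}\|x-a_J\|^q\;\leq\;\frac{1}{\binom{n-1}{k-1}}\sum_{J}\|x-a_J\|^q,
\]
and even the optimal choice of $x$ only controls the full sum on the right by $\binom{n}{k}$ (all $a_J$ lie in $\ball{}$), which yields $n/k$, not $1/k$. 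There is no mechanism in your sketch that makes the restricted sum $\sum_{J\ni i}\|x-a_J\|^q$ small independently of $n$.

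The actual argument in Adiprasito et al.\ (and in this paper) is a contradiction-plus-iteration: assume no point works, and greedily build a sequence $K_1,\dots,K_j$ so that $\operatorname{dist}(0,\bigcap_{i\leq j}K_i)$ grows step by step---each step using the surrogate inequality applied to the nearest point $p$ in $\bigcap_{i\leq j-1}K_i$ and a new set $K_j$ far from $p$. After $k$ steps the intersection lies outside $\ball{}$, contradicting the hypothesis. Your surrogate inequality slots into \emph{that} iteration perfectly well (this is exactly how the paper uses it), but it does not plug into the averaging scheme you outlined. Replace your ``combinatorial argument, essentially verbatim'' paragraph with the iterative contradiction argument and the proof goes through.
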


We will provide explicit bounds on $r_k(X)$ using the modulus of convexity, a function associated with any Banach space that ``measures'' the convexity of the unit ball. Furthermore, our method yields the estimate $r_k (\R^d) = \frac{4}{\sqrt{k}}$ for \Href{Proposition}{prp:no-dim_Helly} in the Euclidean case.

In \Href{Section}{sec:proofs_no-dim_Helly}, we will formulate and prove several no-dimensional Helly-type results for uniformly convex Banach spaces: fractional Helly's theorem, colorful Helly's theorem, and colorful fractional Helly's theorem. Here, we present a version of the colorful fractional Helly's theorem for a super-reflexive space:

\begin{thm}
\label{thm:no-dim_fractional_colorful_Helly}
Let $\alpha \in (0,1]$ and $\beta = 1-(1-\alpha)^{\frac{1}{k}}$. Let $X$ be a super-reflexive Banach space. There exist a strictly positive constant $C_r$ and $w \in (0, 1/2]$ such that the sequence $r_k(X) = C_r k^{-w}$ satisfies the following property:

If, for an $\alpha$ fraction of ``rainbow'' $k$-tuples $K_1 \in \mathcal{F}_1, \dots, K_k \in \mathcal{F}_k$ of sets in finite, non-empty families of convex sets $\mathcal{F}_1, \dots, \mathcal{F}_k$ in $X$, the set $\bigcap\limits_{i \in [k]} K_i$ contains a point in the unit ball $\ball{}$, then there exist $x \in X$ and $i \in [k]$ such that at least $\beta |\mathcal{F}_i|$ elements of $\mathcal{F}_i$ intersect the ball $r_k(X) \ball{} + x$.
\end{thm}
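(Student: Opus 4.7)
The plan is to follow the Euclidean strategy of \cite{adiprasito2020theorems} by reducing the fractional colorful statement to the non-fractional colorful no-dimensional Helly theorem, which will have been proved earlier in \Href{Section}{sec:proofs_no-dim_Helly}. Since all of the geometric content is absorbed into that theorem, the reduction is purely combinatorial and consists of iterative averaging, a step whose content and estimates are insensitive to the underlying Banach space.

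First I would perform the iterative averaging. Starting from the hypothesis that at least an $\alpha$-fraction of rainbow tuples in $\mathcal{F}_1 \times \cdots \times \mathcal{F}_k$ are good (meaning their intersection meets $\ball{}$), averaging over the $k$-th coordinate produces some $K_k^{\ast} \in \mathcal{F}_k$ such that at least an $\alpha$-fraction of $(k-1)$-tuples in $\mathcal{F}_1 \times \cdots \times \mathcal{F}_{k-1}$ form a good rainbow tuple when adjoined with $K_k^{\ast}$. Iterating this for coordinates $k,k-1,\dots,2$ produces $K_j^{\ast} \in \mathcal{F}_j$ for $j\ge 2$ and a subfamily $\mathcal{G}_1 \subseteq \mathcal{F}_1$ with $|\mathcal{G}_1| \ge \alpha |\mathcal{F}_1|$ such that, for every $K \in \mathcal{G}_1$, the rainbow tuple $(K,K_2^{\ast},\dots,K_k^{\ast})$ has its intersection meeting $\ball{}$.

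Next I would apply the colorful no-dimensional Helly theorem to the families $\mathcal{G}_1,\{K_2^{\ast}\},\dots,\{K_k^{\ast}\}$, whose rainbow tuples all possess a common point in $\ball{}$, and read off the conclusion for color $i=1$: this yields $x \in X$ with $\dist{x}{K}\le r_k(X)$ for every $K \in \mathcal{G}_1$. Combined with the elementary bound $(1-\alpha)^{1/k}\ge 1-\alpha$ (which gives $\alpha \ge \beta$), the subfamily $\mathcal{G}_1$ already contains at least $\beta|\mathcal{F}_1|$ sets of $\mathcal{F}_1$ intersecting $r_k(X)\ball{} + x$, which is the desired conclusion with $i=1$.

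The main obstacle is that the colorful no-dimensional Helly theorem as usually stated yields a close point for some unspecified color, whereas for the argument above to conclude with $i=1$ one needs a version applicable to any prescribed color class. This strengthening should be essentially automatic from the centroid-of-witnesses proof of the colorful theorem: for a prescribed color one forms the centroid of witness points indexed by the sets of that color, and the modulus-of-convexity inequality (in place of Pythagoras) produces the distance estimate. Modulo this symmetry of the colorful theorem, the combinatorial reduction above extracts a large subfamily of a single color and closes the argument without any new geometric input.
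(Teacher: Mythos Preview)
Your reduction hinges on a ``prescribed color'' strengthening of the colorful no-dimensional Helly theorem, and that strengthening is false. Take $k=2$, let $\mathcal{F}_1=\{L_1,L_2\}$ consist of two parallel hyperplanes tangent to $\ball{}$ on opposite sides, and let $\mathcal{F}_2=\{X\}$. Every rainbow pair meets $\ball{}$, so your averaging returns $\mathcal{G}_1=\mathcal{F}_1$ together with $K_2^{\ast}=X$. But no ball of radius smaller than $1$ meets both $L_1$ and $L_2$, so the conclusion for color $1$ is impossible whenever $r_2(X)<1$; only color $2$ works. No ``centroid-of-witnesses'' argument can rescue this, because the statement you want is simply not true: the freedom to choose the index $i$ in the conclusion is essential, and your averaging has already committed you to $i=1$. (Note also that in this paper the colorful theorem is \emph{derived from} the fractional colorful theorem, so invoking it here would in any case be circular.)

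The paper proceeds instead by contraposition via a counting statement (\Href{Theorem}{thm:genhelly}): if for every $p\in X$ and every color $i$ there are at least $m_i$ sets in $\mathcal{F}_i$ at distance greater than $1$ from $p$, then at least $\prod_{i} m_i$ rainbow $k$-tuples have intersection at distance greater than $1/r_k$ from any fixed point. This is proved by induction on $k$ using \Href{Corollary}{cor:min_deviation_from_hyperplane_sequence}. One then negates the conclusion of the fractional colorful theorem \emph{for all colors simultaneously}, so that $m_i>(1-\beta)\card{\mathcal{F}_i}$ for every $i$, and obtains more than $(1-\beta)^k\prod_i\card{\mathcal{F}_i}=(1-\alpha)\prod_i\card{\mathcal{F}_i}$ bad rainbow tuples, contradicting the hypothesis. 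This route never fixes a color in advance, which is exactly what your approach cannot afford to do.
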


Following \cite{adiprasito2020theorems}, we also formulate a standard corollary of Helly's theorem adapted to our setting -- a no-dimensional centerpoint theorem:

\begin{thm}[No-dimensional centerpoint theorem]
\label{thm:no-dim_centerpoint}
Let $X$ be a super-reflexive Banach space. There exist a strictly positive constant $C_r$ and $w \in (0, 1/2]$ such that the sequence $r_k(X) = C_r k^{-w}$ satisfies the following property: for any $n$-point subset $P$ of $\ball{}$ and any $k$, there exists a point $x$ such that any half-space containing the ball $r_k(X) \ball{} + x$ contains at least $\frac{n}{k}$ points of $P$.
\end{thm}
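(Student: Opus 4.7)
The plan is to reduce \Href{Theorem}{thm:no-dim_centerpoint} to \Href{Theorem}{thm:no-dim_Helly_Banach} via the classical ``convex hull of large subsets'' trick that derives the ordinary centerpoint theorem from Helly's theorem. Set $m = n - \lceil n/k \rceil + 1$ and consider the finite family
\[
\mathcal{F} = \braces{\mathrm{conv}(S) \st S \subset P,\ \card{S} = m}
\]
of convex subsets of $\ball{}$, where the inclusion in $\ball{}$ uses $P \subset \ball{}$ together with convexity of the unit ball.

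For any $k$ members $\mathrm{conv}(S_1),\dots,\mathrm{conv}(S_k) \in \mathcal{F}$, we have $\card{P \setminus S_i} = n - m = \lceil n/k \rceil - 1$, so the union bound yields
\[
\card{P \setminus \bigcap_{i=1}^k S_i} = \card{\bigcup_{i=1}^k (P \setminus S_i)} \leq k(n-m) = k(\lceil n/k \rceil - 1) < n.
\]
Therefore $\bigcap_i S_i \neq \emptyset$, and any point in this intersection lies in $\ball{} \cap \bigcap_i \mathrm{conv}(S_i)$, verifying the $k$-fold intersection hypothesis of \Href{Theorem}{thm:no-dim_Helly_Banach}. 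Applying that theorem to $\mathcal{F}$ produces $x \in X$ with $\dist{x}{\mathrm{conv}(S)} \leq r_k(X)$ for every $m$-subset $S \subset P$.

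To deduce the centerpoint property, suppose for contradiction that some closed halfspace $H = \braces{y \in X \st f(y) \leq c}$, with non-zero $f \in X^*$, contains $r_k(X)\ball{} + x$ yet satisfies $\card{P \cap H} < n/k$, so $\card{P \setminus H} \geq m$. Because $P$ is finite and $P \setminus H \subset \braces{f > c}$, one has $P \setminus H \subset \braces{f \geq c + \epsilon}$ for some $\epsilon > 0$. Pick any $m$-subset $S \subset P \setminus H$; then $\mathrm{conv}(S) \subset \braces{f \geq c + \epsilon}$. From $x + r_k(X)\ball{} \subset H$ we obtain $f(x) + r_k(X)\norm{f}_* \leq c$, and so
\[
\dist{x}{\mathrm{conv}(S)} \geq \frac{(c+\epsilon) - f(x)}{\norm{f}_*} \geq r_k(X) + \frac{\epsilon}{\norm{f}_*} > r_k(X),
\]
contradicting the distance bound from \Href{Theorem}{thm:no-dim_Helly_Banach}.

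The argument is purely combinatorial-geometric; all analytic content is packaged inside \Href{Theorem}{thm:no-dim_Helly_Banach}, and I do not expect any real obstacle. The only minor subtlety is the bookkeeping of strict versus non-strict halfspaces, dealt with above by the $\epsilon$-slack coming from the finiteness of $P$.
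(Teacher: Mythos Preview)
Your proof is correct and follows essentially the same approach as the paper: form the family of convex hulls of all sufficiently large subsets of $P$, verify the $k$-fold intersection hypothesis by the standard union-bound (double-counting) argument, apply \Href{Theorem}{thm:no-dim_Helly_Banach}, and derive a contradiction from a bad half-space. Your version is in fact slightly more careful than the paper's---you make the integer threshold $m = n - \lceil n/k\rceil + 1$ explicit and handle the strict-versus-nonstrict distance issue via the $\epsilon$-slack coming from finiteness of $P$---but the underlying idea is identical.
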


We observed that the original proof strategy of \Href{Proposition}{prp:no-dim_Helly} and other Helly-type results in the Euclidean case from \cite{adiprasito2020theorems} works perfectly in uniformly convex Banach spaces. One only needs to replace a certain Euclidean relation with a relation involving the modulus of convexity. For this reason, we will follow the original proofs of Helly-type results from \cite{adiprasito2020theorems}, substituting the use of the Pythagorean theorem with certain inequalities. Our key insight is as follows:

\begin{lem}
\label{lem:deviation_from_supp_hyperplane}
Fix $\rho > 0$. Let $H$ be a supporting hyperplane at point $x$ to the unit ball $\ball{}$ of a normed space $\parenth{X, \norm{\cdot}}$. Let $H^{-}$ and $H^{+}$ denote the closed half-spaces with boundary $H$ that contain $\ball{}$ and do not contain $\ball{}$, respectively. Then
\[
\inf\limits_{
y \in H^{+}  \st \norm{x-y} \geq \rho} \norm{y}=
\inf\limits_{
y \in H  \st \norm{x-y} = \rho} \norm{y} 
\quad \text{and} \quad
\sup\limits_{
z \in H^{-}  \st \norm{x-z} \leq \rho} \norm{z}=
\sup\limits_{
z \in H  \st \norm{x-z} = \rho} \norm{z}.
\]
\end{lem}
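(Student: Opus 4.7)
The plan is to prove each equality by noting the trivial direction from set inclusion and then constructing, for an arbitrary candidate in the larger set, a witness in the smaller one with the required norm comparison. I will treat the first (infimum) equality in detail; the second is handled dually.

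The inclusion $\braces{y\in H\st\norm{x-y}=\rho}\subseteq\braces{y\in H^+\st\norm{x-y}\geq\rho}$ makes the inequality $\leq$ immediate. For the reverse inequality, given $y\in H^+$ with $\norm{y-x}\geq\rho$, I will produce $y'\in H$ with $\norm{y'-x}=\rho$ and $\norm{y'}\leq\norm{y}$ in two stages.

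Stage~1 replaces $y$ with $\tilde y:=x+\rho(y-x)/\norm{y-x}$, which lies on $[x,y]\subseteq H^+$ and satisfies $\norm{\tilde y-x}=\rho$; writing $\tilde y$ as a convex combination of $x$ and $y$ and using $\norm{y}\geq\phi(y)\geq 1=\norm{x}$ (here $\phi$ is the normalized supporting functional of $\ball{}$ at $x$, so $\norm{\phi}_*=1$ and $\phi(x)=1$), convexity of the norm yields $\norm{\tilde y}\leq\norm{y}$. Stage~2 transports $\tilde y$ along the sphere $\braces{z\st\norm{z-x}=\rho}$ to $H$: writing $\tilde y=x+\rho u$ with $\norm{u}=1$ and $\phi(u)\geq 0$, it suffices to find a unit vector $v\in\ker\phi$ with $\norm{x+\rho v}\leq\norm{x+\rho u}$. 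The geometric key here is the Birkhoff--James orthogonality of $x$ with $\ker\phi$: if $\phi(w)=0$ then $\phi(x+tw)=1$ and hence $\norm{x+tw}\geq\phi(x+tw)=1=\norm{x}$ for every $t\in\R$. Combining this with the convexity of $\norm{\cdot}$, one deduces that the infimum of $\norm{x+\rho\,\cdot\,}$ over the upper hemisphere $\braces{\phi(v)\geq 0,\norm{v}=1}$ equals its infimum over the equator $\braces{\phi(v)=0,\norm{v}=1}$, delivering the required $v$ and hence $y'=x+\rho v$.

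The main obstacle is Stage~2. In the Euclidean setting it is essentially the Pythagorean identity $\norm{x+\rho u}^2=1+2\rho\iprod{x}{u}+\rho^2\geq 1+\rho^2$; in a general normed space, the Birkhoff--James orthogonality at the contact point $x$ plays the role of the Pythagorean relation and controls how $\norm{x+\rho u}$ behaves as $u$ is pushed from the upper hemisphere toward the equator. The supremum equality follows by the dual two-stage argument: Stage~1 extends $z\in H^-$ along the ray from $x$ through $z$ to distance exactly $\rho$ (a one-dimensional convexity argument on this ray, using $\norm{x}=1$, shows the norm does not decrease), and Stage~2 moves the extended point along the sphere toward $H$ without decreasing the norm, again via the Birkhoff--James orthogonality at $x$.
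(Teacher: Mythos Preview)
Your approach differs from the paper's: the paper first reduces to a two-dimensional strictly convex and smooth space (by intersecting with planes through $x$ and approximating the norm), and then argues by contradiction that an interior extremum would force the two spheres $\partial(\rho\ball{}+x)$ and $\partial(\|y_0\|\ball{})$ to share a supporting line at $y_0$, which in a strictly convex smooth space pins $y_0$ on the line through $0$ and $x$ and yields $\|y_0\|=1+\rho$ (respectively $|1-\rho|$), a contradiction. Your proposal instead attempts a direct two-stage construction without any reduction.

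There is a genuine gap in Stage~2, and it is exactly the heart of the lemma. You assert that ``combining Birkhoff--James orthogonality with convexity of $\|\cdot\|$ one deduces that the infimum of $\|x+\rho\,\cdot\,\|$ over the upper hemisphere equals its infimum over the equator,'' but you never show how. Concretely, writing $u=\alpha x+\beta w$ with $\alpha=\phi(u)>0$ and $w\in\ker\phi$ a unit vector, one has $\|x+\rho u\|=(1+\rho\alpha)\,h(s)$ with $h(t)=\|x+tw\|$ and $s=\rho\beta/(1+\rho\alpha)$, and you need $h(\rho)\leq(1+\rho\alpha)h(s)$. Birkhoff--James gives $h\geq1$ and the $1$-Lipschitz bound gives $h(\rho)\leq h(s)+(\rho-s)$; together these reduce the task to $\beta\geq(1-\alpha)(1+\rho\alpha)$, but from $\|u\|=1$ one only gets $\beta\geq1-\alpha$. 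So the ingredients you cite do not close the argument, and some additional idea (such as the paper's tangency argument after reduction to a strictly convex smooth plane) is needed. The same objection applies verbatim to your Stage~2 for the supremum.

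There is also a smaller error in Stage~1 for the supremum: your ``one-dimensional convexity argument'' does not show that the norm is non-decreasing along the ray from $x$ through $z$. Take $z=\tfrac12 x$ and $\rho=1$; extending to distance $\rho$ gives $\tilde z=0$, and $\|\tilde z\|=0<\tfrac12=\|z\|$. The fix is easy (when $\|z\|\leq1$ one has $\|z\|\leq1\leq\sup_{z'\in H,\ \|x-z'\|=\rho}\|z'\|$ directly, and your convexity argument is valid when $\|z\|\geq1$), but as written the claim is false.
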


It is a natural assumption for a specialist in geometric Banach space theory that the rightmost infimum can be bounded from  below by the modulus of convexity, while the supremum can be bounded from above by the so-called modulus of smoothness of a Banach space. We will demonstrate that this is indeed the case.

The author believes that the geometric inequality underlying the proofs of \Href{Theorem}{thm:no-dim_Helly_Banach} and \Href{Theorem}{thm:no-dim_fractional_colorful_Helly} is more intriguing than the theorems themselves, as it reveals a certain duality. It is well known that Helly's theorem is dual to Carathéodory's theorem, which states that every point in the convex hull  of a set $S \subset \R^d$ is a convex combination of at most
$d+1$ points of $S$. A similar duality emerges in the case of no-dimensional results, and we will now elaborate on this.

The author previously provided a geometric proof of a no-dimensional Carathéodory theorem in uniformly smooth Banach spaces \cite{ivanov2021approximate}. Generally, a no-dimensional Carathéodory theorem requires properties related to the smoothness of the space (see the celebrated Maurey lemma \cite{pisier1980remarques}). Interestingly, a Banach space is uniformly smooth if and only if its dual is uniformly convex (for more details and definitions, see \cite{DiestelEng}). Moreover, when examining the proofs in detail, the geometric inequalities underlying the no-dimensional Helly and Carathéodory theorems appear to be somewhat dual to each other. As we will demonstrate in \Href{Section}{sec:deviation_from_hyperplane}, these inequalities are direct corollaries of the identities in \Href{Lemma}{lem:deviation_from_supp_hyperplane}.

The author is also interested in exploring a probabilistic proof of the no-dimensional Helly's theorem. The probabilistic proof of the no-dimensional Carathéodory theorem yields several corollaries, such as the no-dimensional versions of the selection lemma and the weak $\epsilon$-net theorem in a Banach space of type $p > 1$, as shown in \cite{ivanov2019no}.

\medskip
\noindent {\textbf{Open problem:}}  
Is there a version of the no-dimensional Helly's theorem and a corresponding dual version of Maurey's lemma in a Banach space of cotype $q \geq 2$?

\section{Geometrical idea behind the proof}

We begin with a sketch of the proof of \Href{Proposition}{prp:no-dim_Helly}. Our goal is to demonstrate that \Href{Lemma}{lem:deviation_from_supp_hyperplane} is the key non-combinatorial insight in the proof, guiding the estimates needed for the case of Banach spaces.

\begin{proof}[Sketch of the proof of \Href{Proposition}{prp:no-dim_Helly}]
For contradiction, assume that for any point $p \in \R^d$, there exists a set $K \in \mathcal{F}$ such that $\dist{p}{K} > r_k(\R^d)$. 

We will inductively construct a sequence of sets $\{K_i\}$ from $\mathcal{F}$ such that the intersections $\bigcap_{i \in [j]} K_i$ progressively move farther away from the origin. Quantitatively, we want to choose the sequence $\{K_i\}$ such that 
\begin{equation}
\label{eq:euclidean_deviation_no_dim_Helly}
 \dist{0}{\bigcap_{i \in [j]} K_i} > \sqrt{j} \cdot r_k(\R^d).
\end{equation}
This will solve the problem because, when $j = k$, the inequality implies that there are $k$ sets in the family whose intersection does not intersect the unit ball, since $\sqrt{k} \cdot r_k(\R^d) = 1$.

The inductive construction is straightforward and follows from the following simple Euclidean corollary of \Href{Lemma}{lem:deviation_from_supp_hyperplane}. 

\begin{prp}
\label{prp:euclidean_min_deviation}
Assume that a point $p$ and two convex sets $K$ and $L$ in $\R^d$ satisfy the following conditions:
\begin{enumerate}
\item $\dist{0}{K} = \norm{p} > \rho_1$; 
\item $\dist{p}{L}  > \rho_2$.
\end{enumerate}
Then $\dist{0}{K \cap L} > \sqrt{\rho_1^2 + \rho_2^2}$, with the convention that $\dist{q}{\emptyset} = \infty$.
\end{prp}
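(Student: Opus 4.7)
The plan is to identify $p$ as the metric projection of the origin onto $K$, extract the corresponding supporting hyperplane, and then invoke \Href{Lemma}{lem:deviation_from_supp_hyperplane} to bound $\norm{q}$ for an arbitrary $q \in K \cap L$.

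First, I would observe that the hypothesis $\dist{0}{K} = \norm{p}$ (combined with the fact that this distance is attained at $p$, which is the natural reading in the inductive scheme sketched above) forces the hyperplane $H$ passing through $p$ and orthogonal to $p$ to be a supporting hyperplane to $K$ at $p$. Let $H^{+}$ be the closed half-space bounded by $H$ that does \emph{not} contain the origin; then $K \subset H^{+}$. Crucially, $H$ is also the supporting hyperplane to the ball $\norm{p}\ball{}$ at $p$, which places us exactly in the setting of \Href{Lemma}{lem:deviation_from_supp_hyperplane} (up to the harmless rescaling by $\norm{p}$).

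Next, for any $q \in K \cap L$ I would use the two hypotheses: $q \in K \subset H^{+}$ and $\norm{q-p} \geq \dist{p}{L} > \rho_2$. Applying \Href{Lemma}{lem:deviation_from_supp_hyperplane} with $\rho = \rho_2$ then gives
\[
\norm{q} \;\geq\; \inf\braces{\norm{y} \st y \in H,\ \norm{y-p}=\rho_2}.
\]
Since $y - p$ is orthogonal to $p$ whenever $y \in H$, the Pythagorean theorem evaluates the right-hand side explicitly as $\sqrt{\norm{p}^2 + \rho_2^2}$. Combined with $\norm{p} > \rho_1$, this yields $\norm{q}^2 \geq \norm{p}^2 + \rho_2^2 > \rho_1^2 + \rho_2^2$, and the conclusion follows (the case $K \cap L = \emptyset$ being covered by the stated convention).

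I do not anticipate any serious obstacle; the content is essentially the observation that \Href{Lemma}{lem:deviation_from_supp_hyperplane} packages the ``drop an orthogonal'' step of the Euclidean argument in a way that will later generalize to uniformly convex Banach spaces (where Pythagoras is replaced by a modulus-of-convexity estimate). The only care needed is making sure the strict inequality $\norm{p} > \rho_1$ survives intact through the final comparison, which it does.
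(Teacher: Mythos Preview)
Your proof is correct and follows exactly the route the paper indicates: it remarks that \Href{Proposition}{prp:euclidean_min_deviation} is a direct corollary of the law of cosines, and that \Href{Lemma}{lem:deviation_from_supp_hyperplane} reduces it to the Pythagorean identity---which is precisely your argument (and the Euclidean specialization of the general \Href{Lemma}{lem:min_deviation_from_hyperplane}). Your reading that $p$ must be the nearest point in $K$ to the origin is the intended one, as confirmed by the explicit hypothesis in the non-Euclidean analogue, \Href{Corollary}{cor:min_deviation_from_hyperplane_sequence}.
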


Assume we have found sets $K_1, \dots, K_j$ satisfying \eqref{eq:euclidean_deviation_no_dim_Helly}. Let $p$ be the metric projection of the origin onto $\bigcap\limits_{i \in [j]} K_i$. By our assumption, there exists a set $K_{j+1} \in \mathcal{F}$ such that $\dist{p}{K_{j+1}} > r_k(\R^d)$. By \Href{Proposition}{prp:euclidean_min_deviation}, we obtain the desired inequality:
\[
 \dist{0}{\bigcap_{i \in [j+1]} K_i} > \sqrt{j+1} \cdot r_k(\R^d).
\]
\end{proof}

\Href{Proposition}{prp:euclidean_min_deviation} is a direct corollary of the law of cosines. In turn, \Href{Lemma}{lem:deviation_from_supp_hyperplane} reduces it to the Pythagorean identity. What we will need to prove our Helly-type results is a version of the inequality in \Href{Proposition}{prp:euclidean_min_deviation} for normed spaces. Since \Href{Lemma}{lem:deviation_from_supp_hyperplane} works in any normed space, it reduces the problem to finding the minimal deviation of a point on a supporting hyperplane to the unit ball from the origin. 

This is where the modulus of convexity, defined in the next section, becomes essential as it measures this deviation. Moreover, we will see that the deviation itself can serve as an alternative ``modulus of convexity,'' equivalent to the original one. 

For the Helly-type results, all algorithms and proofs from \cite{adiprasito2020theorems} apply to our case with the necessary adaptation of using an appropriate version of \Href{Proposition}{prp:euclidean_min_deviation}. Some routine analysis is required to bound the sequence $\{r_k(X)\}$ in the general case.

\section{Uniformly convex Banach spaces and their properties}
\label{sec:moduli_Banach_spaces}

In this section, we recall some definitions from geometric Banach space theory. We refer the reader to the excellent books \cite{DiestelEng} and \cite{Lindenstrauss1979} for comprehensive overviews of results on the modulus of convexity and related geometric concepts.

For a normed space $\parenth{X, \norm{\cdot}}$, we define the modulus of convexity as follows:
\begin{equation*}
\mcox{\e}: = \inf \left\{ 1 - \frac{\norm{x+y}}{2} \ :\ x,y \in \ball{},\ \norm{x - y} \geq \e \right\}.
\end{equation*}
The function $\mcox{\cdot}: [0, 2] \to [0, 1]$ is called the \emph{modulus of convexity} of $X$. A Banach space is said to be \emph{uniformly convex} if its modulus of convexity is strictly positive on $(0, 2]$.

We say that the modulus of convexity $\mcox{\cdot}$ of $X$ is of \emph{power type} $q$ if it satisfies the inequality:
\begin{equation}
\label{eq:mcox_power_type}
\mcox{\e} \geq C_X \e^{q} \quad \text{for some} \quad C_X > 0,\ q \geq 2,\ \e \in [0, 1].
\end{equation}
It is well known that an $L_p$ space for $1 < p < \infty$ is uniformly convex, and its modulus of convexity is well understood; in particular, it satisfies inequality \eqref{eq:mcox_power_type} for certain parameters.

Uniform convexity of the norm is not stable under small perturbations. However, an equivalent renormalization of the space only introduces a constant factor to $r_k(X)$. Thus, to prove \Href{Theorem}{thm:no-dim_Helly_Banach}, it suffices to establish the result for some equivalent norm on the space. Classical results from Banach space theory, due to Enflo \cite{Enflo1972} and Pisier \cite{pisier1975martingales}, allow us to consider only uniformly convex spaces with a modulus of convexity of power type. Summarizing their results, we state the following proposition:

\begin{prp} \label{prp:superreflexivity_uniform_conv}
The following assertions are equivalent:
\begin{itemize}
\item A Banach space $X$ admits an equivalent uniformly convex norm;
\item A Banach space $X$ admits an equivalent uniformly convex norm with a modulus of convexity of power type $q$;
\item A Banach space $X$ is super-reflexive. 
\end{itemize}
\end{prp}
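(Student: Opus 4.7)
The plan is to prove the three stated equivalences by a cyclic chain, relying almost entirely on classical results whose original proofs I will cite rather than reproduce, since the proposition is explicitly advertised as a repackaging of theorems of Enflo and Pisier. The trivial direction is (2) $\Rightarrow$ (1): a uniformly convex norm of power type $q$ is in particular uniformly convex, so no further argument is needed. It remains to close the loop with (1) $\Rightarrow$ (3) and (3) $\Rightarrow$ (2).

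For (1) $\Rightarrow$ (3), I would first recall the Milman--Pettis theorem: any uniformly convex Banach space is reflexive. Since uniform convexity (and reflexivity) of a space $Y$ is preserved when one passes to a space $X$ that is finitely representable in $Y$ with a uniformly controlled distortion, the existence of an equivalent uniformly convex norm on $X$ forces every ultrapower of $X$ to be reflexive, i.e.\ $X$ is super-reflexive. The converse direction (3) $\Rightarrow$ (1) is exactly Enflo's renorming theorem from \cite{Enflo1972}, which produces an equivalent uniformly convex norm on any super-reflexive Banach space. Together these give the equivalence (1) $\Leftrightarrow$ (3).

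For (3) $\Rightarrow$ (2), I would invoke Pisier's martingale renorming theorem from \cite{pisier1975martingales}, which strengthens Enflo's result: on any super-reflexive Banach space $X$ one can construct an equivalent norm whose modulus of convexity satisfies $\mconame_X(\varepsilon) \geq C_X \varepsilon^q$ for some finite $q \geq 2$ and constant $C_X > 0$. Pisier's proof goes through martingale inequalities, using that super-reflexivity is equivalent to a nontrivial martingale cotype condition, which in turn yields the power-type lower bound on the modulus. Combining (2) $\Rightarrow$ (1), (1) $\Rightarrow$ (3), and (3) $\Rightarrow$ (2) closes the cycle.

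The only genuine obstacle would be reproving Pisier's martingale renorming theorem from scratch, which is substantial; however, since the proposition is stated purely as a summary of classical results, my proof proposal is simply the citation structure above, with the Milman--Pettis theorem and the standard finite-representability argument filling in the remaining elementary step.
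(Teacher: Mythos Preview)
The paper does not give a proof of this proposition at all; it is stated as a summary of classical results, with the text immediately preceding it reading ``Classical results from Banach space theory, due to Enflo \cite{Enflo1972} and Pisier \cite{pisier1975martingales}, allow us to consider only uniformly convex spaces with a modulus of convexity of power type. Summarizing their results, we state the following proposition.'' Your proposal is therefore fully in line with the paper's treatment: you supply exactly the citation structure the paper gestures at, together with the trivial implication $(2)\Rightarrow(1)$ and the standard Milman--Pettis plus finite-representability argument for $(1)\Rightarrow(3)$.

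One small wording wobble: in your $(1)\Rightarrow(3)$ step the quantifiers are slightly tangled. You want to say that if $X$ carries an equivalent uniformly convex norm, then every space finitely representable in $X$ (equivalently, every ultrapower of $X$) inherits the same modulus of convexity and is therefore reflexive by Milman--Pettis; hence $X$ is super-reflexive. Your sentence has the roles of $X$ and $Y$ momentarily reversed, but the intended argument is correct and standard.
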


However, we believe that yet another ``modulus of convexity'' is more convenient for our purposes, particularly when estimating the deviation of a point in a supporting half-space of the unit ball from the ball itself. Let us introduce some definitions following \cite{ivanov2017new} and \cite{Ivanov_supp_modulus}.

We say that $y$ is \emph{quasi-orthogonal} to a non-zero vector $x \in X$, denoted by $y \urcorner x$, if there exists a functional $p$ that attains its norm on $x$ and vanishes on $y$. The following conditions are equivalent:
\begin{itemize}
\item $y$ is quasi-orthogonal to $x$;
\item For any $\lambda \in \R$, the vector $x + \lambda y$ lies in the supporting hyperplane to the ball $\norm{x}\ball{}$ at $x$;
\item For any $\lambda \in \R$, the inequality $\norm{x + \lambda y} \geq \norm{x}$ holds;
\item $x$ is orthogonal to $y$ in the sense of Birkhoff--James (see \cite{DiestelEng}, Ch. 2, \S 1, and \cite{AlonsoMartiniWu_birkhoff_orthogonality}).
\end{itemize}

In the Euclidean case, quasi-orthogonality is equivalent to standard orthogonality.

We associate with any normed space $X$ the function $\zetam{\cdot} \colon [0, \infty) \to  [0, \infty)$ defined as:
\[
\zetam{\e} =  \inf\braces{\norm{x + \e y} \st \norm{x} = \norm{y} = 1, y \urcorner x}.
\]

Using Euclidean analogy, $\zetam{\e}$ is the lower bound on the ``hypotenuse'' in a ``quasi-right-angled'' triangle, where the ``cathetus'' of length $\e$ is quasi-orthogonal to another one of length 1.

We say that two functions $f$ and $g$ are \emph{equivalent} if there exist positive constants $a, b, c, d, e$ such that $a f(bt) \leq g(t) \leq c f(dt)$ for all $t \in [0, e]$. It follows from \cite[Corollary 2]{ivanov2017new} that the function $\zetam{\cdot} - 1$ is equivalent to the modulus of convexity:

\begin{prp}
\label{prp:hypotenuse_mcox}
Let $X$ be an arbitrary Banach space. Then the functions $\zetam{\cdot} - 1$ and $\mcox{\cdot}$ are equivalent, and the following inequalities hold:
\[
\mcox{\frac{\e}{2}}  \leq \zetam{\e} - 1 \leq \mcox{2\e}, \quad \e \in \left[0, 1\right].
\]
\end{prp}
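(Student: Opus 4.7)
The plan is to prove the two inequalities separately, each via an application of the modulus of convexity to a carefully chosen pair of vectors combined with \Href{Lemma}{lem:deviation_from_supp_hyperplane}.

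For the upper bound $\zetam{\e} - 1 \leq \mcox{2\e}$, I would fix $\delta > 0$ and pick unit vectors $u, v \in X$ that nearly witness $\mcox{2\e}$: $\norm{u - v} \geq 2\e$ and $1 - \norm{(u+v)/2} \leq \mcox{2\e} + \delta$. Set $m = (u+v)/2$ and $x = m/\norm{m}$, and, possibly after swapping $u$ and $v$, let $p$ be a norm-one supporting functional at $x$ with $p(u) \geq p(v)$ (so $p(u) \geq \norm{m}$, since $p(u) + p(v) = 2\norm{m}$). Setting $z := x + (u - v)/2$, the identity $z = u + (1 - \norm{m})\,x$ together with the triangle inequality yields $\norm{z} \leq 2 - \norm{m} \leq 1 + \mcox{2\e} + \delta$, while $p(z) = 1 + (p(u) - p(v))/2 \geq 1$ and $\norm{z - x} = \norm{(u-v)/2} \geq \e$ show that $z$ lies in the outer half-space of the supporting hyperplane $H_x$ at distance at least $\e$ from $x$. \Href{Lemma}{lem:deviation_from_supp_hyperplane} then yields $\zetam{\e} \leq \norm{z}$, and letting $\delta \to 0$ completes this half.

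For the lower bound $\mcox{\e/2} \leq \zetam{\e} - 1$, fix a valid pair $(x, y)$ with $\norm{x} = \norm{y} = 1$ and $y \urcorner x$, and set $a := \norm{x + \e y}$. The idea is to compare the unit vectors $u := (x + \e y)/a$ and $v := x$ via the modulus of convexity. A supporting functional $p$ at $x$ with $p(y) = 0$ yields $\norm{u + v} \geq p(u + v) = 1 + 1/a$, while decomposing $u - v = -\frac{a-1}{a}\,x + \frac{\e}{a}\,y$ and using the elementary trick $\max(A, B - A) \geq B/2$ (applied to $A = (a-1)/a$ via $|p(u-v)|$ and $B = \e/a$ via the reverse triangle inequality in the $y$-direction) gives $\norm{u - v} \geq \e/(2a)$. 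Substituting into $1 - \norm{(u+v)/2} \geq \mcox{\norm{u - v}}$ produces the intermediate inequality
\[
\frac{a - 1}{2a} \;\geq\; \mcox{\frac{\e}{2a}}.
\]

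The step I expect to be most delicate is converting this intermediate estimate into the sharp form $a \geq 1 + \mcox{\e/2}$ stated in the proposition. Since $1 \leq a \leq 1 + \e \leq 2$, the standard monotonicity of $\mcox{\cdot}/\cdot$ (Figiel), combined with a short case analysis that separates $a$ close to $1$ from $a$ bounded away from $1$ (and where necessary uses the symmetry $y \mapsto -y$ to also bring $\norm{x - \e y}$ under control), recovers the explicit constant $\mcox{\e/2}$; the detailed bookkeeping is the content of \cite[Corollary 2]{ivanov2017new}.
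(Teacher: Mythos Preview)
The paper does not actually prove \Href{Proposition}{prp:hypotenuse_mcox}; it is quoted from \cite[Corollary~2]{ivanov2017new}, with the accompanying remark that the cited source in fact proves the sharper lower bound $\mcox{\e/(1+\e)}\le\zetam{\e}-1$, from which $\mcox{\e/2}\le\zetam{\e}-1$ follows on $[0,1]$ by monotonicity of $\mcox{\cdot}$. So there is no ``paper's own proof'' to compare against; you are supplying one where the paper supplies a citation.

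Your upper-bound argument is correct and rather elegant: the point $z=x+(u-v)/2$ genuinely lies in the outer half-space $H^{+}$ with $\norm{z-x}\ge\e$, and \Href{Lemma}{lem:deviation_from_supp_hyperplane} together with the definition of $\zetam{\cdot}$ indeed yields $\zetam{\e}\le\norm{z}\le 1+\mcox{2\e}+\delta$.

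Your lower-bound argument, however, does not reach the stated constant. From your intermediate inequality $\tfrac{a-1}{2a}\ge\mcox{\e/(2a)}$ with $1\le a\le 2$ you obtain $a-1\ge 2\,\mcox{\e/4}$, which establishes the \emph{equivalence} claim with the weaker constant $\e/4$ in place of $\e/2$; but Figiel's monotonicity of $\mcox{t}/t$ goes the wrong way here (it gives $\mcox{\e/(2a)}\le a^{-1}\mcox{\e/2}$, not $\ge$), so the ``short case analysis'' you allude to does not close the gap, and you end up deferring to the very reference the paper already cites. If you want the precise inequality $\mcox{\e/2}\le\zetam{\e}-1$, one clean route is to compare the pair $x\pm\e y$ (both of norm $\ge1$ since $\pm y\urcorner x$) rather than the pair $(x+\e y)/a,\ x$; this yields the bound with $\e/(1+\e)$ that the paper's remark records, from which $\e/2$ follows on $[0,1]$.
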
 

\begin{rem}
There is a typo in \cite[Corollary 2]{ivanov2017new}, where it is written as $\zetam{\e}$ instead of $\zetam{\e} - 1$ in the corresponding inequality. Furthermore, the bound given there is $ \mcox{\frac{\e}{1 + \e}} \leq \zetam{\e} - 1$, which implies the bound $\mcox{\frac{\e}{2}} \leq \zetam{\e}$ on the given interval $\e \in \left[0, 1\right]$.
\end{rem}

While we would love to claim that the function $\zetam{\cdot}$ is convex, we do not believe this is true in general. Since the modulus of convexity is not necessarily convex (see \emph{Remarks} on p.67 of \cite{Lindenstrauss1979} and the original paper \cite{Liokoumovich1973}). We will return to the properties of the function $\zetam{\cdot}$ in \Href{Section}{sec:hypotenuse}, as we will need several technical observations. These observations are based on simple two-dimensional geometry.

We will also use the following property of uniformly convex Banach spaces \cite[Theorem 8.2.2.]{larsen1973functional}.

We say that a point $p \in K$ is a \emph{nearest point} from a point $x$ to $K$ in a given Banach space $\parenth{X, \norm{\cdot}}$ if $\dist{x}{K} = \norm{x - p}$.

\begin{prp}
\label{prp:nearest_point_ucs}
Let $K$ be a non-empty closed convex subset of a uniformly convex Banach space. For any point $x$, there exists a unique nearest point from $x$ to $K$.
\end{prp}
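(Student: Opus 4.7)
The plan is to prove existence and uniqueness separately, each as a direct quantitative consequence of the strict positivity of the modulus of convexity $\mcox{\cdot}$ on $(0,2]$; no appeal to reflexivity or weak compactness is needed.

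For uniqueness, I would suppose two nearest points $p_1, p_2 \in K$ to $x$ share the common distance $d = \norm{x - p_1} = \norm{x - p_2}$. If $d = 0$, both equal $x$ and there is nothing to prove. Otherwise, setting $u_i = (x - p_i)/d$ gives two unit vectors, and convexity of $K$ together with $\tfrac{1}{2}(p_1 + p_2) \in K$ yields $\norm{\tfrac{1}{2}(u_1 + u_2)} = \norm{x - \tfrac{1}{2}(p_1+p_2)}/d \geq 1$. The definition of $\mcox{\cdot}$ then forces $\norm{u_1 - u_2} = 0$, since otherwise we would have $\norm{\tfrac{1}{2}(u_1+u_2)} \leq 1 - \mcox{\norm{u_1-u_2}} < 1$, a contradiction.

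For existence, I would take a minimizing sequence $\braces{p_n} \subset K$ with $M_n := \norm{x - p_n} \to d := \dist{x}{K}$ and show it is Cauchy. The case $d = 0$ follows at once from closedness of $K$. Otherwise, fix $\e > 0$, choose $N$ so that $M_n \leq d + \e$ for all $n \geq N$, and for $n, m \geq N$ set $M = \max(M_n, M_m)$ together with $u_n = (x - p_n)/M$ and $u_m = (x - p_m)/M$. Then $\norm{u_n}, \norm{u_m} \leq 1$, while $\tfrac{1}{2}(p_n + p_m) \in K$ gives $\norm{\tfrac{1}{2}(u_n+u_m)} \geq d/M \geq d/(d+\e)$. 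If $\norm{u_n - u_m} \geq \eta$, the definition of the modulus of convexity yields $\norm{\tfrac{1}{2}(u_n+u_m)} \leq 1 - \mcox{\eta}$, whence $\mcox{\eta} \leq \e/(d+\e)$. Strict positivity of $\mcox{\cdot}$ on $(0,2]$ therefore forces $\norm{u_n - u_m} \to 0$ as $\e \to 0$, so $\norm{p_n - p_m} \leq (d+\e)\norm{u_n - u_m}$ is small and $\braces{p_n}$ is Cauchy. Its limit lies in $K$ by closedness and attains $d$.

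The only real obstacle is the bookkeeping around the normalization by $M$, which is needed so that the modulus of convexity applies to vectors in the closed unit ball; this is routine. The argument could equivalently be phrased via \Href{Proposition}{prp:hypotenuse_mcox} and the function $\zetam{\cdot}$, but the direct reduction to the definition of $\mcox{\cdot}$ is the most transparent.
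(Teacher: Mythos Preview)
Your argument is correct and is the standard direct proof: uniqueness from the midpoint trick, existence from showing a minimizing sequence is Cauchy, both driven by the strict positivity of $\mcox{\cdot}$ on $(0,2]$ together with completeness and closedness of $K$. The bookkeeping you flag (normalizing by $M=\max(M_n,M_m)$ so that the vectors land in $\ball{}$) is indeed routine, and your remark that no detour through reflexivity or weak compactness is required is a pleasant feature of this approach.

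There is, however, nothing in the paper to compare against: the paper does not prove \Href{Proposition}{prp:nearest_point_ucs} at all. It is quoted as a known fact with a citation to \cite[Theorem 8.2.2.]{larsen1973functional} and then used as a black box in the proofs of the Helly-type results. What you have written is essentially the argument one would find in that or any comparable functional-analysis reference.
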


\section{Helly-type results and their proofs}
\label{sec:proofs_no-dim_Helly}
In this section, we will prove several no-dimensional Helly-type results in uniformly convex Banach spaces. We will see that the combinatorial part of the proofs of the following Helly-type results is exactly the same as in the Euclidean case in \cite{adiprasito2020theorems}, and the only difference will be an extension of the Euclidean inequality of \Href{Proposition}{prp:euclidean_min_deviation}.

\subsection{Assumptions and Notations}
\label{subsec:assumptions}
For a fixed normed space $X,$
 the sequence $\{r_j\}$ is defined inductively:
\begin{itemize}
\item $r_1 =1;$
\item for all natural $j,$ $r_{j+1}$ is defined as the unique solution to 
\begin{equation}
\label{eq:bump_sequence_Helly}
 r_{j+1} \cdot \zetam{r_{j+1}}= {r_j}.
\end{equation}
\end{itemize}

The sequence $\{r_k\}$ will play the role of $\braces{r_k(X)}$ from \Href{Theorem}{thm:no-dim_Helly_Banach}, \Href{Theorem}{thm:no-dim_fractional_colorful_Helly}, and \Href{Theorem}{thm:no-dim_centerpoint}. We will show that it is well-defined later in \Href{Section}{sec:hypotenuse}.

Throughout this section, we assume that $X$ is a uniformly convex Banach space.

\subsection{Arithmetic and geometric properties of the sequence $r_k$}

We now formulate two statements about the properties of the sequence $\{r_k\}$ defined in \Href{Subsection}{subsec:assumptions}, postponing their proofs.

We will use the following arithmetic properties of the sequence $\{r_k\}$. The proof is based on routine analysis.

\begin{lem}\label{lem:bump_sequence}
In any normed space $X$, the sequence $\{r_k\}$ is well-defined. 
If $X$ is uniformly convex, then $\{r_k\}$ is a strictly monotonically decreasing null sequence. 
Additionally, if the modulus of convexity of $X$ satisfies inequality \eqref{eq:mcox_power_type}, that is, it is of power type $q$, then
\[
r_k \leq \tilde{C}_r k^{-\frac{1}{q}},
\]
where $\tilde{C}_r = \max\!\braces{1, 2\parenth{\frac{2}{q C_X}}^{\frac{1}{q}}}$.
\end{lem}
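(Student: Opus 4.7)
The plan is to verify the three assertions in order, leveraging two elementary properties of $\zetam{\cdot}$. First, for each admissible pair $(x, y)$ the function $\e \mapsto \norm{x + \e y}$ is convex in $\e$ with minimum value $1$ attained at $\e = 0$ (by quasi-orthogonality), hence non-decreasing on $[0, \infty)$; since an infimum of non-decreasing functions is non-decreasing, $\zetam{\cdot}$ is non-decreasing with $\zetam{0} = 1$ and $\zetam{\e} \geq 1$ throughout. Second, each $\norm{x + \e y}$ is $1$-Lipschitz in $\e$ by the triangle inequality, and this property passes to the infimum, so $\zetam{\cdot}$ is $1$-Lipschitz and hence continuous. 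Consequently $f(r) := r \zetam{r}$ is continuous, satisfies $f(0) = 0$ and $f(r) \geq r \to \infty$, and is strictly increasing because $r$ itself is strictly increasing and $\zetam{r} \geq 1 > 0$. The intermediate value theorem together with strict monotonicity delivers a unique solution $r_{j+1}$ of $f(r) = r_j$ for every $r_j > 0$, proving well-definedness in any normed space.

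Assume now $X$ is uniformly convex. By \Href{Proposition}{prp:hypotenuse_mcox}, $\zetam{r} - 1 \geq \mcox{r/2} > 0$ for every $r > 0$, so $f(r_j) > r_j$ whenever $r_j > 0$, and strict monotonicity of $f$ forces $r_{j+1} < r_j$. The sequence is strictly decreasing and bounded below by $0$, so it converges to some $L \geq 0$. Passing to the limit in $r_j = f(r_{j+1})$ using continuity of $f$ gives $L = L \zetam{L}$, i.e., $\zetam{L} = 1$; the same lower bound $\zetam{L} - 1 \geq \mcox{L/2}$ combined with uniform convexity of $X$ forces $L = 0$.

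For the quantitative estimate under \eqref{eq:mcox_power_type}, \Href{Proposition}{prp:hypotenuse_mcox} converts the hypothesis into $\zetam{r} \geq 1 + C_X r^q/2^q$ for $r \in [0, 1]$, and this range covers the entire sequence since $r_j \leq r_1 = 1$. Raising the recursion $r_j = r_{j+1}\zetam{r_{j+1}}$ to the $q$-th power and applying Bernoulli's inequality $(1 + u)^q \geq 1 + qu$ with $u = C_X r_{j+1}^q/2^q$ yields
\[
r_j^q \;\geq\; r_{j+1}^q\!\left(1 + \frac{q C_X r_{j+1}^q}{2^q}\right).
\]
Rearranging gives $r_{j+1}^{-q} - r_j^{-q} \geq (qC_X/2^q)/(1 + qC_X r_{j+1}^q/2^q)$. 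Since $r_{j+1} \leq 1$ and $qC_X \leq 2^q$ (a universal bound: $\mcox{1} \leq 1$ forces $C_X \leq 1$, while $q \leq 2^q$ for $q \geq 1$), the denominator is at most $2$, producing the clean estimate
\[
r_{j+1}^{-q} - r_j^{-q} \;\geq\; \frac{qC_X}{2^{q+1}}.
\]
Telescoping from $r_1^{-q} = 1$ gives $r_k^{-q} \geq 1 + (k-1)qC_X/2^{q+1}$; since $qC_X/2^{q+1} \leq 1$, the elementary identity $1 + (k-1)a \geq ka$ (valid for $a \in [0, 1]$) upgrades this to $r_k^{-q} \geq k \cdot qC_X/2^{q+1}$, whence $r_k \leq (2^{q+1}/(qC_X))^{1/q} k^{-1/q} = 2(2/(qC_X))^{1/q} k^{-1/q}$. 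Combining with the trivial bound $r_k \leq r_1 = 1$ produces the stated $\tilde{C}_r$.

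The main technical obstacle is the final accounting. To land on the sharp constant $2(2/(qC_X))^{1/q}$ rather than a slightly worse one, it is essential to apply Bernoulli to $\zetam{\cdot}^q$ so that the telescoping quantity is $r_k^{-q}$ itself, growing linearly at rate $qC_X/2^{q+1}$; a more naive route via the mean value theorem applied to $r \mapsto r^{-q}$ introduces an extra factor of $(r_{j+1}/r_j)^{q+1}$ that costs a power of $2^{1/q}$ in the final constant. Verifying the universal smallness estimate $qC_X/2^{q+1} \leq 1$, needed for the $1 + (k-1)a \geq ka$ step to absorb the base term without loss, is then routine.
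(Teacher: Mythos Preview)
Your argument is correct. The first two parts (well-definedness and the strictly-decreasing-to-zero property) are essentially the paper's proof, only more explicit: you derive the monotonicity and continuity of $\zetam{\cdot}$ directly rather than quoting \Href{Corollary}{cor:monotonicity_zetam}, and you spell out the limit argument for $r_k \to 0$ that the paper leaves to the reader.

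For the power-type bound the two proofs diverge. The paper argues by direct induction on the target inequality $r_j \leq \tilde{C}_r j^{-1/q}$: using the monotonicity of $t \mapsto t\,\zetam{t}$ it reduces the step to checking $\zetam{\tilde{C}_r(j+1)^{-1/q}} \geq (1+1/j)^{1/q}$, then bounds the left side below via \Href{Proposition}{prp:hypotenuse_mcox} and the right side above via Bernoulli's inequality $(1+x)^{1/q} \leq 1 + x/q$. You instead raise the recursion to the $q$-th power, apply Bernoulli in the form $(1+u)^q \geq 1+qu$, and telescope the resulting uniform lower bound on $r_{j+1}^{-q} - r_j^{-q}$. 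Both routes land on exactly the same constant $2(2/(qC_X))^{1/q}$. Your telescoping has the small structural advantage that the lower bound on $\zetam{\cdot}$ is only ever invoked at the actual values $r_{j+1} \in [0,1]$, so the range restriction in \Href{Proposition}{prp:hypotenuse_mcox} is automatically satisfied; in the paper's induction one must tacitly note that when $\tilde{C}_r(j+1)^{-1/q} > 1$ the induction step holds trivially because $r_{j+1} \leq 1$. Conversely, the paper's comparison argument makes the role of the specific constant $\tilde{C}_r$ transparent from the outset rather than emerging at the end of a telescoping sum. Your closing remark about ``combining with the trivial bound $r_k \leq 1$'' is harmless but unnecessary: since $qC_X \leq 2^q$ one has $2(2/(qC_X))^{1/q} \geq 2^{1/q} > 1$, so the $\max$ in $\tilde{C}_r$ is always attained by the second term anyway.
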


This purely technical lemma will be proven in \Href{Section}{sec:hypotenuse}. 
\begin{rem}
In the Euclidean case, \Href{Lemma}{lem:bump_sequence} gives the bound
 $\tilde{C}_r = 2 \parenth{\sqrt{2}+1}^{\frac{1}{2}} < 4$. This means that our method gives us the bound $r_k(\R^d) = \frac{4}{\sqrt{k}}$ in \Href{Proposition}{prp:no-dim_Helly}.
\end{rem}

Our non-Euclidean analogue of \Href{Proposition}{prp:euclidean_min_deviation} is the following, which is a more or less direct corollary of \Href{Lemma}{lem:deviation_from_supp_hyperplane}:

\begin{cor}
\label{cor:min_deviation_from_hyperplane_sequence}
Fix $j \geq 2.$ Under the assumptions in \Href{Subsection}{subsec:assumptions}, assume that a point $p$ and two convex sets $K$ and $Q$ in $X$ satisfy the following conditions:
\begin{enumerate}
\item $p$ is the nearest point from the origin to $K$, and
$\dist{0}{K} = \norm{p} > \frac{1}{r_{j-1}}$; 
\item $\dist{p}{L}  > 1$.
\end{enumerate}
Then $\dist{0}{K \cap L} > \frac{1}{r_j}$, with the convention that $\dist{q}{\emptyset} = \infty$.
\end{cor}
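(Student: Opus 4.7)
The plan is to take an arbitrary $y \in K \cap L$ (if the intersection is empty there is nothing to show) and bound $\norm{y}$ from below. Since $p$ is the nearest point from the origin to $K$, a standard Hahn--Banach separation argument yields a norm-one functional $\phi$ with $\phi(p) = \norm{p}$ and $\phi|_K \geq \norm{p}$; equivalently, the hyperplane $H = \braces{z \st \phi(z) = \norm{p}}$ is a supporting hyperplane to $\norm{p}\ball{}$ at $p$, and $K$ lies in the closed half-space $H^{+}$ that does not meet the open ball of radius $\norm{p}$.

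Next I rescale by $\norm{p}$ to fit the setting of \Href{Lemma}{lem:deviation_from_supp_hyperplane}. With $\tilde{p} = p/\norm{p}$, $\tilde{y} = y/\norm{p}$, and $\tilde{H} = H/\norm{p}$, the hyperplane $\tilde{H}$ supports the unit ball at $\tilde{p}$, and $\tilde{y} \in \tilde{H}^{+}$ with $\norm{\tilde{y} - \tilde{p}} > 1/\norm{p}$ by hypothesis~(2). Applying \Href{Lemma}{lem:deviation_from_supp_hyperplane} with $\rho = 1/\norm{p}$ lets me replace the lower bound on $\norm{\tilde{y}}$ by an infimum of $\norm{z}$ over $z \in \tilde{H}$ satisfying $\norm{z - \tilde{p}} = \rho$. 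Any such $z$ can be written as $\tilde{p} + \rho u$ for a unit vector $u \in \ker \phi$, and since $\phi$ attains its norm at $\tilde{p}$, every such $u$ is quasi-orthogonal to $\tilde{p}$; the definition of $\zetam{\cdot}$ then yields $\norm{z} \geq \zetam{\rho}$. Rescaling back gives $\norm{y} \geq \norm{p} \cdot \zetam{1/\norm{p}}$.

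It remains to verify that this lower bound strictly exceeds $1/r_j$. Writing $s = 1/\norm{p}$, hypothesis~(1) reads $s < r_{j-1}$, and it suffices to show $s/\zetam{s} < r_j$. The defining recursion $r_j \cdot \zetam{r_j} = r_{j-1}$ is what makes this possible, together with the monotonicity of $\zetam{\cdot}$ (which follows from the convexity of $\e \mapsto \norm{x + \e y}$ and the characterization $\norm{x + \lambda y} \geq \norm{x}$ for $y \urcorner x$). Splitting into the cases $s \leq r_j$ and $r_j < s < r_{j-1}$, and using $\zetam{r_j} = r_{j-1}/r_j > 1$ in the first case and $\zetam{s} \geq \zetam{r_j} = r_{j-1}/r_j$ in the second, one verifies $s/\zetam{s} < r_j$ in both, with strictness inherited from $\norm{p} > 1/r_{j-1}$. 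This last arithmetic step is where the recursive choice of $\braces{r_k}$ is exploited, and it is the main substantive point beyond the direct application of \Href{Lemma}{lem:deviation_from_supp_hyperplane}.
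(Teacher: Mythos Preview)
Your proof is correct and follows essentially the same route as the paper. The paper packages your Hahn--Banach/rescaling step as a separate lemma (\Href{Lemma}{lem:min_deviation_from_hyperplane}) yielding $\dist{0}{K\cap L} > \norm{p}\,\zetam{1/\norm{p}}$, and then performs the arithmetic using the slope inequality $(\zetam{t_1}-1)/t_1 \le (\zetam{t_2}-1)/t_2$ from \Href{Corollary}{cor:monotonicity_zetam} rather than plain monotonicity; your case split with bare monotonicity of $\zetam{\cdot}$ is a slightly cleaner way to reach the same conclusion.
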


This corollary will be proven after the proof of \Href{Lemma}{lem:deviation_from_supp_hyperplane} in \Href{Section}{sec:deviation_from_hyperplane}. Additionally, we will formulate a more general version of \Href{Corollary}{cor:min_deviation_from_hyperplane_sequence} without specific bounds on $\norm{p}$ (i.e., without involving $r_{j-1}$ and $r_j$).

\begin{rem}
The author wishes to emphasize again that there is no essential difference between the proofs of the corresponding Helly-type results in \cite{adiprasito2020theorems} and \cite{adiprasito2019theorems} for the Euclidean case and the uniformly convex Banach space case considered here, except for the use of \Href{Corollary}{cor:min_deviation_from_hyperplane_sequence}.
\end{rem}

\subsection{No-dimensional fractional Helly theorem and its corollaries}
The fractional version of the classical Helly theorem was obtained in \cite{katchalski1979problem}. Following the approach outlined in \cite{adiprasito2020theorems} for the Euclidean case, the no-dimensional Helly's theorem follows from the no-dimensional fractional version stated below:

\begin{thm}[No-dimensional fractional Helly's theorem]\label{thm:no-dim_fractional_Helly_Banach}
Let $\alpha \in (0,1]$ and $\beta = 1-(1-\alpha)^{\frac{1}{k}}$. Under the assumptions in \Href{Subsection}{subsec:assumptions}, if $\mathcal{F}$ is a finite family of at least $k$ convex sets in $X$ such that for an $\alpha$ fraction of $k$-tuples $K_1,\ldots,K_k$ of $\mathcal{F}$, the set $\bigcap\limits_{i \in [k]} K_i$ contains a point in the unit ball $\ball{}$, then there exists $x \in X$ such that at least $\beta \card{\mathcal{F}}$ elements of $\mathcal{F}$ intersect the ball $r_k \ball{} + x$.
\end{thm}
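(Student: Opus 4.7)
The plan is to follow the greedy-with-averaging template from \cite{adiprasito2020theorems}, replacing the Pythagorean step by \Href{Corollary}{cor:min_deviation_from_hyperplane_sequence}. Assume for contradiction that for every $x \in X$, strictly fewer than $\beta \card{\mathcal{F}}$ members of $\mathcal{F}$ intersect $r_k \ball{} + x$; equivalently, strictly more than $(1-\beta)\card{\mathcal{F}}$ members of $\mathcal{F}$ lie at distance greater than $r_k$ from $x$. The goal is to exhibit a single ordered $k$-tuple $\sigma = (K_1,\ldots,K_k) \in \mathcal{F}^k$ that is simultaneously \emph{Helly-good}, meaning $\bigcap_{i\in[k]} K_i$ meets $\ball{}$, and \emph{greedy-good}: setting $p_0 = 0$ and, whenever $\bigcap_{i<j}K_i$ is non-empty, letting $p_{j-1}$ be the unique (by \Href{Proposition}{prp:nearest_point_ucs}) nearest point of that intersection to the origin, we have $\dist{p_{j-1}}{K_j} > r_k$ for every $j \in [k]$.

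Given such a $\sigma$, Helly-goodness forces $\bigcap_{i\leq j} K_i$ to be non-empty at every stage, so all $p_j$ are well-defined. I then argue by induction on $j$ that $\norm{p_j} > r_k/r_j$: the base case $j=1$ is immediate from $r_1 = 1$ combined with $\dist{0}{K_1} > r_k$, while the inductive step applies a scaled version of \Href{Corollary}{cor:min_deviation_from_hyperplane_sequence}---the corollary is scale-invariant, so multiplying its thresholds $1/r_{j-1}, 1, 1/r_j$ by $r_k$ is legitimate---with $K = \bigcap_{i<j} K_i$ and $L = K_j$. At $j = k$ this yields $\norm{p_k} > r_k/r_k = 1$, contradicting Helly-goodness since $p_k$ lies in $\bigcap_{i\in[k]} K_i$ and that intersection meets $\ball{}$.

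The existence of such a $\sigma$ will be obtained by a first-moment argument. View $\sigma$ as a uniformly random element of $\mathcal{F}^k$; then Helly-goodness has probability at least $\alpha$. For greedy-goodness, observe that for any fixed prefix $(K_1,\ldots,K_{j-1})$ the point $p_{j-1}$ is either undefined---in which case the $j$th condition is vacuous---or is a determined point of $X$, in which case the contradiction hypothesis bounds the number of $K_j \in \mathcal{F}$ failing $\dist{p_{j-1}}{K_j} > r_k$ by strictly less than $\beta \card{\mathcal{F}}$. Hence the conditional probability that step $j$ holds exceeds $1-\beta$ regardless of the prefix, and chaining multiplicatively over $j$ gives $\Pr[\text{greedy-good}] > (1-\beta)^k = 1-\alpha$. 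Since the Helly-good and greedy-good probabilities sum to more than $1$, some $\sigma$ realises both events, producing the sought contradiction.

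The main subtlety will be book-keeping: handling rigorously the case in which $\bigcap_{i<j} K_i$ becomes empty during the multiplicative count (these tuples cannot be Helly-good, so they do not obstruct the final contradiction, but one must ensure they do not create spurious ``bad'' contributions either), and extracting the $j=1$ base case cleanly since \Href{Corollary}{cor:min_deviation_from_hyperplane_sequence} is formally stated only for $j \geq 2$. A subsidiary point is reconciling whether ``$k$-tuple'' in the statement refers to ordered or unordered selections---the averaging argument transfers with only cosmetic changes once the appropriate uniform distribution is fixed.
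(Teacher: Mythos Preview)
Your proposal is correct and follows essentially the same route as the paper: assume the conclusion fails, then show that more than a $(1-\beta)^k = 1-\alpha$ fraction of ordered $k$-tuples have intersection disjoint from the ball, contradicting the hypothesis. The paper phrases this as a direct inductive count of ``good'' $j$-tuples (after an initial rescaling by $1/r_k$ so the threshold becomes $1$), whereas you phrase it probabilistically and separate the combinatorial step (bounding the greedy-good probability, which needs only the contradiction hypothesis) from the geometric step (deriving $\norm{p_k}>1$ from greedy-good via \Href{Corollary}{cor:min_deviation_from_hyperplane_sequence}); this separation is mildly cleaner but not substantively different, and the paper likewise handles the empty-intersection case, the closedness assumption needed for \Href{Proposition}{prp:nearest_point_ucs}, and the ordered-versus-unordered bookkeeping in the same spirit you flag.
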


\begin{proof}
Set $n = \card{\mathcal{F}}$. We scale the picture by a factor of $\frac{1}{r_k}$. That is, we assume that $\mathcal{F}$ is a finite family of at least $k$ convex sets in $X$ such that for an $\alpha$ fraction of $k$-tuples $K_1,\ldots,K_k$ of $\mathcal{F}$, the set $\bigcap\limits_{i \in [k]} K_i$ has a point in the ball $\frac{1}{r_k}\ball{}$, and we want to show that there is $x \in X$ such that at least $\beta n$ elements of $\mathcal{F}$ intersect the ball $\ball{} + x$.

For contradiction, assume that for any $p \in X$, there are at least $m$ sets $K$ in $\mathcal{F}$, for $m > (1 - \beta)n$, such that $\dist{p}{K} > 1$. Without loss of generality, we assume that all the sets are closed. Using \Href{Proposition}{prp:nearest_point_ucs}, under this assumption, one sees that for any point $x$ and any intersection $S$ of a finite number of sets in $\mathcal{F}$, there is a nearest point from $x$ to $S$.

Call the ordered $j$-tuple ${K_1, \dots, K_j}$ \emph{good} if $\dist{0}{\bigcap\limits_{i \in [j]}{K_i}} > \frac{1}{r_j}. $ We show, by induction on $j$, that the number of good $j$-tuples ${K_1, \dots, K_j}$ of $\mathcal{F}$ is greater than $(1-\beta)^j n (n-1) \dots (n-j+1)$. Note that $n(n - 1) \dots (n - j + 1)$ is the total number of ordered $j$-tuples of $\mathcal{F}$.

Setting $j = k$, we get that the fraction of ordered $k$-tuples that are good, i.e., $k$-tuples whose common intersection is disjoint from $\frac{1}{r_k}\ball{}$, is greater than $(1 - \beta)^k = (1 - \alpha)$, a contradiction to our assumption on the number of $k$-tuples intersecting $\frac{1}{r_k}\ball{}$.

Fix a good $(j-1)$-tuple ${K_1, \dots, K_{j-1}}$ of $\mathcal{F}$. Consider two cases:

\medskip
\noindent \textbf{Case 1.} The intersection $\bigcap\limits_{i \in [j-1]}{K_i}$ is empty. Then we can add any $K \in \mathcal{F}$ distinct from $K_1, \dots, K_{j-1}$. This gives $n - j + 1$ good $j$-tuples of $\mathcal{F}$ extending each previous good $(j - 1)$-tuple.

\medskip
\noindent \textbf{Case 2.} The intersection $\bigcap\limits_{i \in [j-1]}{K_i}$ is non-empty. Let $p$ be the nearest point in $\bigcap\limits_{i \in [j-1]}{K_i}$ to the origin. As we are considering good $(j -1)$-tuples, we have $\norm{p} > \frac{1}{r_{j-1}}$. Next, there are $m$ sets $K_j \in \mathcal{F}$ with $\dist{p}{K_j} > 1$. All these sets are different from all $K_i, i \in [j-1]$, because $p$ belongs to all $K_i, i \in [j-1]$. Fix one such $K_j.$  

Applying \Href{Corollary}{cor:min_deviation_from_hyperplane_sequence}, we have:
\[
\dist{0}{\bigcap\limits_{i \in [j]}{K_i}}= 
\dist{0}{\parenth{\bigcap\limits_{i \in [j-1]}{K_i}} \cap K_{j}} > \frac{1}{r_j}.
\]
Thus, $K_1, \dots, K_j$ is a good $j$-tuple.
It gives altogether $m$ $j$-tuples of $\mathcal{F}$ extending each previous good $(j-1)$-tuple.

Hence, each good $(j-1)$-tuple can be extended to a good $j$-tuple in either $m$ or $n -j+1$ ways, completing the induction.
\end{proof}

\begin{thm}\label{thm:no-dim_Helly_Banach_uco}
Under the assumptions in \Href{Subsection}{subsec:assumptions}, if $K_1,\ldots,K_n$ are convex sets in $X$ such that the unit ball $\ball{}$ intersects $\bigcap\limits_{j \in J}K_j$ for every $J\subset [n]$ of size $k$, then there is a point $x \in X$ such that
\[
\dist{x}{K_i} \leq r_k \quad \text{for all} \quad i \in [n].
\]
\end{thm}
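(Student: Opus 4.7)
The plan is to derive this result directly from the no-dimensional fractional Helly theorem (\Href{Theorem}{thm:no-dim_fractional_Helly_Banach}) that was just established, by specializing to $\alpha = 1$. The hypothesis of \Href{Theorem}{thm:no-dim_Helly_Banach_uco} states that every $k$-subfamily of $\{K_1,\ldots,K_n\}$ has a common point in $\ball{}$; in particular, each of the $n(n-1)\cdots(n-k+1)$ ordered $k$-tuples of distinct members of $\mathcal{F} = \{K_1,\ldots,K_n\}$ has an intersection meeting $\ball{}$. Hence the fraction $\alpha$ appearing in the fractional statement equals $1$, and the corresponding $\beta = 1-(1-\alpha)^{1/k}$ is also $1$.

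Applying \Href{Theorem}{thm:no-dim_fractional_Helly_Banach} then yields a point $x \in X$ such that at least $\beta n = n$ members of $\mathcal{F}$ meet the ball $r_k \ball{} + x$. Since $\card{\mathcal{F}} = n$, this forces every $K_i$ to meet $r_k \ball{} + x$, which is equivalent to $\dist{x}{K_i} \leq r_k$ for all $i \in [n]$, precisely the conclusion sought.

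I do not expect any substantive obstacle: the combinatorial and geometric content is entirely absorbed into \Href{Theorem}{thm:no-dim_fractional_Helly_Banach} (which leans on \Href{Corollary}{cor:min_deviation_from_hyperplane_sequence} and an induction over ``good'' tuples), so the reduction is a direct specialization. If a self-contained argument were preferred, one could redo the same inductive scheme used inside the fractional proof --- rescale by $1/r_k$, assume for contradiction that for every candidate center some $K \in \mathcal{F}$ lies at distance greater than $1$, and extend a good $(j-1)$-tuple via \Href{Corollary}{cor:min_deviation_from_hyperplane_sequence} to a good $j$-tuple with $\dist{0}{\bigcap_{i \in [j]} K_i} > 1/r_j$, until at $j = k$ the bound $\dist{0}{\bigcap_{i \in [k]} K_i} > 1/r_k$ contradicts the hypothesis --- again with \Href{Corollary}{cor:min_deviation_from_hyperplane_sequence} as the sole analytic input.
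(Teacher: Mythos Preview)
Your proposal is correct and matches the paper's own proof exactly: the paper simply states that the result follows directly from \Href{Theorem}{thm:no-dim_fractional_Helly_Banach} with $\alpha = 1$, which is precisely the specialization you carry out.
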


\begin{proof}
The result directly follows from \Href{Theorem}{thm:no-dim_fractional_Helly_Banach} with $\alpha = 1$.
\end{proof}

\subsection{No-dimensional colorful fractional Helly theorem and its corollaries}
The now-classical extension of the Helly theorem is its colorful version of Lov\'asz \cite{barany1982generalization}. The colorful version of the fractional Helly theorem, originally due to Katchalski and Liu, was obtained in \cite{barany2014colourful}. Here, we will follow the approach suggested in \cite{adiprasito2020theorems} to derive a no-dimensional colorful Helly theorem from its fractional version.
\begin{thm}\label{thm:genhelly}
Under the assumptions in \Href{Subsection}{subsec:assumptions}, let $\mathcal{F}_1, \dots, \mathcal{F}_k$ be finite and non-empty families of convex sets in $X$. If for every $p \in X$ there are at least $m_i$ sets $K \in \mathcal{F}_i$ such that $\dist{p}{K} > 1$ for all $i \in [k]$, then for every $q \in X$, there are at least $\prod\limits_{i \in [k]} m_i$ ``rainbow'' $k$-tuples $K_1 \in \mathcal{F}_1, \dots, K_k \in \mathcal{F}_k$ such that
\[
\dist{q}{\bigcap\limits_{j \in [k]} K_j} > \frac{1}{r_k},
\]
with the convention that $\dist{q}{\emptyset}=\infty$.
\end{thm}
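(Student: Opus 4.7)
The plan is to adapt the inductive counting argument from the proof of \Href{Theorem}{thm:no-dim_fractional_Helly_Banach} to the colorful setting. Fix $q \in X$; by translation we may assume $q = 0$, and we may assume all sets are closed (so nearest points exist by \Href{Proposition}{prp:nearest_point_ucs}). For each $j \in [k]$, call an ordered rainbow $j$-tuple $(K_1, \ldots, K_j) \in \mathcal{F}_1 \times \cdots \times \mathcal{F}_j$ \emph{good} if $\dist{0}{\bigcap_{i \in [j]} K_i} > 1/r_j$. I claim by induction on $j$ that the number of good rainbow $j$-tuples is at least $\prod_{i \in [j]} m_i$; specializing to $j = k$ then yields the theorem.

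The base case $j = 1$ is immediate from the hypothesis applied at $p = 0$: since $r_1 = 1$, there are at least $m_1$ sets $K_1 \in \mathcal{F}_1$ with $\dist{0}{K_1} > 1 = 1/r_1$. For the inductive step, fix any good $(j-1)$-tuple $(K_1, \ldots, K_{j-1})$ and split into two cases. If $\bigcap_{i \in [j-1]} K_i$ is empty, then every $K_j \in \mathcal{F}_j$ produces a good $j$-tuple (by the convention $\dist{0}{\emptyset} = \infty$), giving $\card{\mathcal{F}_j} \geq m_j$ extensions. Otherwise, let $p$ be the unique nearest point to the origin in $\bigcap_{i \in [j-1]} K_i$, which satisfies $\norm{p} > 1/r_{j-1}$ because the tuple is good. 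Applying the hypothesis at this specific $p$ to the family $\mathcal{F}_j$, there are at least $m_j$ sets $K_j \in \mathcal{F}_j$ with $\dist{p}{K_j} > 1$; for each such choice, \Href{Corollary}{cor:min_deviation_from_hyperplane_sequence} applied to $K = \bigcap_{i \in [j-1]} K_i$ and $L = K_j$ yields $\dist{0}{\bigcap_{i \in [j]} K_i} > 1/r_j$, so $(K_1, \ldots, K_j)$ is good. Hence in either case every good $(j-1)$-tuple admits at least $m_j$ good extensions.

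Distinct good $(j-1)$-tuples give disjoint collections of extended $j$-tuples (they already disagree in the first $j-1$ coordinates), so the induction step multiplies the count of good tuples by at least $m_j$, giving the claimed bound $\prod_{i \in [j]} m_i$ at stage $j$. I do not expect a genuine obstacle here: the colored structure actually simplifies the combinatorics compared to the uncolored fractional case (there is no need to worry about ``running out'' of sets when extending, because $\mathcal{F}_j$ is a fresh family disjoint in role from the earlier choices, so one never has to use the case-split on $m_j$ versus $n - j + 1$ that appears in \Href{Theorem}{thm:no-dim_fractional_Helly_Banach}), and the only non-combinatorial ingredient, \Href{Corollary}{cor:min_deviation_from_hyperplane_sequence}, is exactly the geometric inequality already isolated for the no-dimensional Helly proofs.
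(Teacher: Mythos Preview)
Your proposal is correct and follows essentially the same argument as the paper's proof: induction on the number of colors, with each good $(j-1)$-tuple extended by at least $m_j$ choices from $\mathcal{F}_j$ via the two-case split (empty intersection versus nonempty intersection handled through the nearest point and \Href{Corollary}{cor:min_deviation_from_hyperplane_sequence}). Your write-up is in fact slightly more careful than the paper's in making the translation to $q=0$, the closure of the sets, and the disjointness of extensions explicit.
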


\begin{proof}
Without loss of generality, assume that all the sets are closed. Using \Href{Proposition}{prp:nearest_point_ucs}, one sees that for any point $x$ and any intersection $S$ of a finite number of sets in $\mathcal{F}_1, \dots, \mathcal{F}_k$, there is a nearest point from $x$ to $S$.

The proof proceeds by induction on $k$, where the base case $k = 1$ is trivial. For the induction step from $k-1$ to $k$, fix a point $x \in X$ and consider the families $\mathcal{F}_1, \dots, \mathcal{F}_{k-1}$. By the induction hypothesis, there are at least $\prod\limits_{i \in [k-1]} m_i$ ``rainbow'' $(k-1)$-tuples $K_1 \in \mathcal{F}_1, \dots, K_{k-1} \in \mathcal{F}_{k-1}$ such that:
\[
\dist{x}{\bigcap\limits_{j \in [k-1]} K_j} > \frac{1}{r_{k-1}}.
\]
Similar to the proof of \Href{Theorem}{thm:no-dim_fractional_Helly_Banach}, consider the following two cases:

\medskip
\noindent \textbf{Case 1.} The intersection $\bigcap\limits_{i \in [k-1]} K_i$ is empty. In this case, adding any $K_k \in \mathcal{F}_k$ results in $\bigcap\limits_{i \in [k]} K_i = \emptyset$. This implies that the $(k-1)$-tuple $K_1 \in \mathcal{F}_1, \dots, K_{k-1} \in \mathcal{F}_{k-1}$ can be extended to a ``rainbow'' $k$-tuple that satisfies the desired inequality in $\card{\mathcal{F}_k}$ different ways.

\medskip
\noindent \textbf{Case 2.} The intersection $\bigcap\limits_{i \in [k-1]} K_i$ is non-empty. Let $p$ be the nearest point in $\bigcap\limits_{i \in [k-1]} K_i$ to $x$. By assumption, there are at least $m_k$ sets $K_k \in \mathcal{F}_k$ such that $\dist{p}{K_k} > 1$. Applying \Href{Corollary}{cor:min_deviation_from_hyperplane_sequence}, we have:
\[
\dist{0}{\bigcap\limits_{i \in [k]}{K_i}}= 
\dist{0}{\parenth{\bigcap\limits_{i \in [k-1]}{K_i}} \cap K_{k}} > \frac{1}{r_k}.
\]

In both cases, the $(k-1)$-tuple $K_1 \in \mathcal{F}_1, \dots, K_{k-1} \in \mathcal{F}_{k-1}$ can be extended to a ``rainbow'' $k$-tuple with $K_k \in \mathcal{F}_k$ in at least $m_k$ ways, meaning that there are at least $\prod\limits_{i \in [k]} m_i$ such $k$-tuples satisfying the inequality
\[
 \dist{x}{\bigcap\limits_{j \in [k]} K_j} > \frac{1}{r_k}.
\]
\end{proof}

\begin{thm}[No-dimensional colorful fractional Helly's theorem]
\label{thm:no-dim_fractional_colorful_Helly_Banach_uco}
Let $\alpha \in (0,1]$ and $\beta = 1-(1-\alpha)^{\frac{1}{k}}$. Under the assumptions in \Href{Subsection}{subsec:assumptions}, let $\mathcal{F}_1, \dots, \mathcal{F}_k$ be finite and non-empty families of convex sets in $X$. If for an $\alpha$ fraction of ``rainbow'' $k$-tuples $K_1 \in \mathcal{F}_1, \dots, K_k \in \mathcal{F}_k$, the set $\bigcap\limits_{i \in [k]} K_i$ contains a point in the unit ball $\ball{}$, then there exist $x \in X$ and $i \in [k]$ such that at least $\beta |\mathcal{F}_i|$ elements of $\mathcal{F}_i$ intersect the ball $r_k \ball{} + x$.
\end{thm}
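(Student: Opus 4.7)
The plan is to deduce this theorem from \Href{Theorem}{thm:genhelly} by a direct counting argument, in complete parallel with how \Href{Theorem}{thm:no-dim_fractional_Helly_Banach} was obtained from its inductive backbone, but using the colorful generalization instead of the monochromatic counting. The combinatorial core is already encapsulated in \Href{Theorem}{thm:genhelly}; all geometry in $X$ has been absorbed into \Href{Corollary}{cor:min_deviation_from_hyperplane_sequence}, so no further Banach-space work is needed.

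First I would rescale the problem by the factor $1/r_k$, exactly as in the proof of \Href{Theorem}{thm:no-dim_fractional_Helly_Banach}: it suffices to show that, whenever at least an $\alpha$ fraction of rainbow $k$-tuples has $\bigcap_{i\in[k]} K_i$ intersecting $\tfrac{1}{r_k}\ball{}$, there exist $x\in X$ and $i\in[k]$ with at least $\beta|\mathcal{F}_i|$ elements of $\mathcal{F}_i$ meeting $\ball{}+x$. Suppose for contradiction that no such $x$ and $i$ exist. Then for every $x\in X$ and every $i\in[k]$, strictly fewer than $\beta|\mathcal{F}_i|$ elements of $\mathcal{F}_i$ intersect $\ball{}+x$, so the number of $K\in\mathcal{F}_i$ with $\dist{x}{K}>1$ is strictly greater than $(1-\beta)|\mathcal{F}_i|$. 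Choose integers $m_i$ with $m_i>(1-\beta)|\mathcal{F}_i|$ such that this lower bound holds uniformly in $x$.

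Now I would apply \Href{Theorem}{thm:genhelly} with these $m_i$ and the choice $q=0$. It yields at least $\prod_{i\in[k]} m_i>(1-\beta)^k\prod_{i\in[k]}|\mathcal{F}_i|=(1-\alpha)\prod_{i\in[k]}|\mathcal{F}_i|$ rainbow $k$-tuples whose common intersection satisfies $\dist{0}{\bigcap_{j\in[k]} K_j}>\tfrac{1}{r_k}$, hence is disjoint from $\tfrac{1}{r_k}\ball{}$. Consequently, strictly fewer than $\alpha\prod_{i\in[k]}|\mathcal{F}_i|$ rainbow $k$-tuples have their intersection meeting $\tfrac{1}{r_k}\ball{}$, contradicting the $\alpha$-fraction assumption.

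Neither step is genuinely difficult: the only mildly delicate point is keeping the strict and non-strict inequalities correctly aligned so that the final contradiction actually goes through (taking $m_i$ strictly larger than $(1-\beta)|\mathcal{F}_i|$, rather than just equal, is what turns the chain of inequalities into a strict one). No new geometric input is required, since \Href{Theorem}{thm:genhelly} has already been established using \Href{Corollary}{cor:min_deviation_from_hyperplane_sequence} and \Href{Proposition}{prp:nearest_point_ucs}.
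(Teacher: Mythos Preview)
Your proposal is correct and follows essentially the same route as the paper: assume the conclusion fails, observe that for every $p$ each color class has strictly more than $(1-\beta)|\mathcal{F}_i|$ sets at distance greater than one (after rescaling), feed this into \Href{Theorem}{thm:genhelly}, and count to obtain more than a $(1-\alpha)$ fraction of rainbow $k$-tuples whose intersection misses the ball, contradicting the hypothesis. Your explicit rescaling and care with the strict inequality (choosing integer $m_i>(1-\beta)|\mathcal{F}_i|$) are in fact slightly more precise than the paper's presentation, but the argument is identical in substance.
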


\begin{proof}
The theorem is a direct consequence of \Href{Theorem}{thm:genhelly}. Denote the size of $\mathcal{F}_i$ by $n_i$. If no ball $r_k \ball{} + x$ intersects $\beta n_i$ elements of $\mathcal{F}_i$, 
then for every $p \in X$, there are strictly more than $(1 - \beta) n_i$ sets $K \in \mathcal{F}_i$ such that $\dist{p}{K} > r_k$. Denote $m_i = (1 - \beta) n_i$. If this assumption holds for all $i \in [k]$, then the number of ``rainbow'' $k$-tuples that are disjoint from a fixed unit ball is larger than:
\[
\prod\limits_{i \in [k]} m_i > (1 - \beta)^k \prod\limits_{i \in [k]} n_i = (1 - \alpha)\prod\limits_{i \in [k]} n_i,
\]
which contradicts the assumption that at least an $\alpha$ fraction of ``rainbow'' $k$-tuples intersect the ball $\ball{}$.
\end{proof}

\begin{thm}
\label{thm:no-dim_colorful_Helly_Banach_uco}
Under the assumptions in \Href{Subsection}{subsec:assumptions}, let $\mathcal{F}_1, \dots, \mathcal{F}_k$ be finite and non-empty families of convex sets in $X$. If for every ``rainbow'' $k$-tuple $K_1 \in \mathcal{F}_1, \dots, K_k \in \mathcal{F}_k$, the set $\bigcap\limits_{i \in [k]} K_i$ contains a point in the unit ball $\ball{}$, then there exist $x \in X$ and $i \in [k]$ such that all the sets in $\mathcal{F}_i$ intersect the ball $r_k \ball{} + x$.
\end{thm}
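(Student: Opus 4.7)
The plan is to deduce this colorful Helly statement as an immediate specialization of the colorful fractional Helly theorem (\Href{Theorem}{thm:no-dim_fractional_colorful_Helly_Banach_uco}) to the extremal fraction $\alpha = 1$. The hypothesis here is exactly the $\alpha = 1$ instance of the hypothesis in that theorem: every rainbow $k$-tuple drawn from $\mathcal{F}_1, \dots, \mathcal{F}_k$ has its intersection meeting the unit ball $\ball{}$, so the ``$\alpha$ fraction'' condition holds with the trivial choice $\alpha = 1$.

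Plugging $\alpha = 1$ into the formula $\beta = 1 - (1 - \alpha)^{1/k}$ yields $\beta = 1$. Therefore, the conclusion of the fractional version asserts the existence of $x \in X$ and an index $i \in [k]$ for which at least $\beta \card{\mathcal{F}_i} = \card{\mathcal{F}_i}$ elements of $\mathcal{F}_i$ intersect the ball $r_k \ball{} + x$. Since $\mathcal{F}_i$ is finite and the number of its elements meeting this ball is an integer bounded above by $\card{\mathcal{F}_i}$, this forces equality, i.e., every set in $\mathcal{F}_i$ intersects $r_k \ball{} + x$, which is exactly the desired conclusion.

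There is no genuine obstacle to overcome: the combinatorial content is already packaged in \Href{Theorem}{thm:no-dim_fractional_colorful_Helly_Banach_uco}, whose proof in turn relies on the key geometric input \Href{Corollary}{cor:min_deviation_from_hyperplane_sequence}. Consequently the proof consists of one sentence invoking the fractional colorful version at $\alpha = 1$ and observing that $\beta = 1$ promotes the $\beta \card{\mathcal{F}_i}$ guarantee into a universal one.
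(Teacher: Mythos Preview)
Your proposal is correct and matches the paper's own proof exactly: the paper simply states that the result directly follows from \Href{Theorem}{thm:no-dim_fractional_colorful_Helly_Banach_uco} with $\alpha = 1$. Your additional remark that $\beta = 1$ forces every set in $\mathcal{F}_i$ to meet $r_k \ball{} + x$ is precisely the observation implicit in that one-line deduction.
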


\begin{proof}
This result directly follows from 
\Href{Theorem}{thm:no-dim_fractional_colorful_Helly_Banach_uco} with $\alpha = 1$.
\end{proof}

\subsection{Proofs of the results from the Introduction}
Using \Href{Proposition}{prp:superreflexivity_uniform_conv}, we consider an equivalent uniformly convex norm on $X$ whose modulus of convexity is of some power type $q$. \Href{Theorem}{thm:no-dim_Helly_Banach} and \Href{Theorem}{thm:no-dim_fractional_colorful_Helly} then follow from their versions in uniformly convex Banach spaces and the bound on $r_k$ obtained in \Href{Lemma}{lem:bump_sequence}.

\begin{proof}[Proof of \Href{Theorem}{thm:no-dim_Helly_Banach}]
The result directly follows from \Href{Theorem}{thm:no-dim_Helly_Banach_uco}.
\end{proof}

\begin{proof}[Proof of \Href{Theorem}{thm:no-dim_fractional_colorful_Helly}]
The result directly follows from \Href{Theorem}{thm:no-dim_fractional_colorful_Helly_Banach_uco}.
\end{proof}

The proof of the no-dimensional centerpoint theorem is straightforward. We use the argument presented in the proof of Theorem 2.10 in \cite{adiprasito2019theorems}, adjusted for our situation.

\begin{proof}[Proof of \Href{Theorem}{thm:no-dim_centerpoint}]
Let $\mathcal{F}$ be the family of convex hulls of more than $\frac{k-1}{k} n$ points of $P$. By a standard double-counting argument, any $k$-tuple of sets from $\mathcal{F}$ contains a point of $P$ in common. Furthermore, since each point of $P$ lies in $\ball{}$, the common intersection of every $k$-tuple of $\mathcal{F}$ must intersect $\ball{}$. Applying \Href{Theorem}{thm:no-dim_Helly_Banach}, we obtain the existence of a point $x$ and a sequence $\{r_k(X)\}$ with the desired property such that $r_k(X) \ball{} + x$ intersects every element of $\mathcal{F}$. This is the required no-dimensional centerpoint.

Indeed, consider any closed half-space $h^{+}$ containing $r_k(X) \ball{} + x$. If it contains fewer than $\frac{n}{k}$ points of $P$, then its complement, the open half-space $h^{-}$, contains more than $n - \frac{n}{k} = \frac{k-1}{k} n$ points, say the set $P^\prime$, of $P$. Since $\dist{x}{P^\prime} > r_k(X)$, this contradicts the choice of $x$.
\end{proof}

\section{Properties of the Hypotenuse}
\label{sec:hypotenuse}
In the current section, we will obtain basic simple properties of the function $\zetam{\cdot}$ and prove  \Href{Lemma}{lem:bump_sequence}.

Let us introduce yet another notation, which drastically simplifies the proofs. 

Fix $\ell$ as a supporting line at $u$ to the unit ball of a normed space $X$ with a unit directional vector $v$. 

Denote $\zeta(t) = \norm{u + tv}$ for $t \in \R.$
Clearly, 
$\zetam{t} = \inf \zeta(t),$
where the infimum is taken over all  $u$ and quasi-orthogonal to $u$ unit vector $v.$

The following lemma directly follows from the fact that $\zeta(t)$ is the restriction of the norm on the line $\ell.$
\begin{lem}
\label{lem:monotonicity_zeta}
Under the above notation, the function $\zeta(\cdot)$ is a convex function satisfying the inequalities 
\[
\frac{\zeta(t_1) -1}{t_1} \leq \frac{\zeta(t_2) -1}{t_2} 
\quad \text{and} \quad \zeta(t_2) \leq \zeta(t_1) + t_2 -t_1.
\] 
for every $0 < t_1 < t_2.$   
\end{lem}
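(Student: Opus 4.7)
The plan is to observe that $\zeta(t) = \norm{u + tv}$ is the restriction to an affine line of the norm, a convex $1$-Lipschitz function on $X$, and then extract the two stated inequalities from these two properties together with the normalization $\zeta(0) = \norm{u} = 1$ (which holds because $u$ is on the unit sphere). No use of the quasi-orthogonality of $v$ to $u$ is needed for this lemma.

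First, convexity of $\zeta$ is immediate: for $s,t \in \R$ and $\lambda \in [0,1]$ we have $u + (\lambda s + (1-\lambda)t)v = \lambda(u+sv) + (1-\lambda)(u+tv)$, so the triangle inequality and positive homogeneity give $\zeta(\lambda s + (1-\lambda)t) \leq \lambda \zeta(s) + (1-\lambda)\zeta(t)$.

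For the first inequality, I would invoke the standard three-chord (monotonicity of difference quotients) property of convex functions: for any convex $f$ and any $a < b < c$ in its domain, $\frac{f(b) - f(a)}{b - a} \leq \frac{f(c) - f(a)}{c - a}$. Applying this with $a = 0$, $b = t_1$, $c = t_2$, and using $\zeta(0) = 1$, yields exactly $\frac{\zeta(t_1) - 1}{t_1} \leq \frac{\zeta(t_2) - 1}{t_2}$.

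For the second inequality, I would avoid convexity altogether and apply the triangle inequality directly: writing $u + t_2 v = (u + t_1 v) + (t_2 - t_1) v$ gives
\[
\zeta(t_2) = \norm{u + t_2 v} \leq \norm{u + t_1 v} + (t_2 - t_1)\norm{v} = \zeta(t_1) + (t_2 - t_1),
\]
since $\norm{v} = 1$. Both steps are entirely routine, so there is no serious obstacle; the only thing worth flagging is that the normalization $\zeta(0) = 1$ is what allows the first inequality to be phrased with the constant $1$ in the numerator.
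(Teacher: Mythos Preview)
Your proof is correct and follows exactly the approach the paper indicates: the paper simply states that the lemma ``directly follows from the fact that $\zeta(t)$ is the restriction of the norm on the line $\ell$,'' and you have spelled out precisely the routine details (convexity of the norm, the three-chord inequality with $\zeta(0)=1$, and the $1$-Lipschitz bound from $\norm{v}=1$) that this one-line justification entails.
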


\begin{cor}
\label{cor:monotonicity_zetam}
In any normed space $X$, the function $\zetam{\cdot}$ is a continuous function satisfying the inequality
\[
\frac{\zetam{t_1} -1}{t_1} \leq \frac{\zetam{t_2} -1}{t_2} 
\] 
for every $0 < t_1 < t_2$.  

In particular, the function $\zetam{\cdot}$ is strictly monotonically increasing on 
\begin{itemize}
\item its Lebesgue level-set  $\braces{t \st \zetam{t} > 1};$
\item  the whole half-line $[0, \infty)$ if $X$ is uniformly convex.
\end{itemize} 
\end{cor}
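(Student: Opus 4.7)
The strategy is to transfer each property from the parametrized family of one-variable functions $\zeta(\cdot)$ studied in \Href{Lemma}{lem:monotonicity_zeta} to their pointwise infimum $\zetam{\cdot}$ via standard near-optimizer arguments.

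For the slope inequality, I would fix $0 < t_1 < t_2$ and, for each $\varepsilon > 0$, pick a quasi-orthogonal pair $(u, v)$ realizing $\zeta(t_2) < \zetam{t_2} + \varepsilon$. Since $\zetam{t_1} \leq \zeta(t_1)$ by the definition of the infimum, the slope inequality in \Href{Lemma}{lem:monotonicity_zeta} applied to this particular pair gives
\[
\frac{\zetam{t_1} - 1}{t_1} \leq \frac{\zeta(t_1) - 1}{t_1} \leq \frac{\zeta(t_2) - 1}{t_2} < \frac{\zetam{t_2} + \varepsilon - 1}{t_2},
\]
and letting $\varepsilon \to 0$ yields the desired inequality for $\zetam{\cdot}$.

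Continuity will follow by showing that $\zetam{\cdot}$ is $1$-Lipschitz. Each $\zeta(\cdot)$ is $1$-Lipschitz because $\enorm{\zeta(t_2) - \zeta(t_1)} = \enorm{\,\norm{u + t_2 v} - \norm{u + t_1 v}} \leq \enorm{t_2 - t_1}$ by the reverse triangle inequality. A symmetric near-optimizer argument then transfers this bound to the infimum in both directions: picking $(u, v)$ nearly optimal at $t_1$ bounds $\zetam{t_2} \leq \zetam{t_1} + (t_2 - t_1) + \varepsilon$, while picking $(u, v)$ nearly optimal at $t_2$ bounds $\zetam{t_1} \leq \zetam{t_2} + (t_2 - t_1) + \varepsilon$. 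Sending $\varepsilon \to 0$ gives $\enorm{\zetam{t_2} - \zetam{t_1}} \leq t_2 - t_1$.

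Strict monotonicity on the level set $\braces{t \st \zetam{t} > 1}$ follows directly from the slope inequality just proved: if $\zetam{t_1} > 1$ and $t_1 < t_2$, then $\zetam{t_2} - 1 \geq (t_2/t_1)\parenth{\zetam{t_1} - 1} > \zetam{t_1} - 1$, hence $\zetam{t_2} > \zetam{t_1}$. In the uniformly convex case, the lower bound $\mcox{\varepsilon/2} \leq \zetam{\varepsilon} - 1$ from \Href{Proposition}{prp:hypotenuse_mcox}, together with $\mcox{\varepsilon} > 0$ for every $\varepsilon > 0$, forces $\zetam{t} > 1$ for all $t > 0$. Thus the level set is all of $(0, \infty)$, and combining with $\zetam{0} = 1$ produces strict monotonicity on $[0, \infty)$. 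I do not anticipate any serious obstacle here: the entire argument is bookkeeping around the infimum together with one application of \Href{Proposition}{prp:hypotenuse_mcox}.
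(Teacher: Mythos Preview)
Your proposal is correct and follows essentially the same approach as the paper: pass the slope and Lipschitz estimates of \Href{Lemma}{lem:monotonicity_zeta} through the infimum via a near-optimizer argument, deduce strict monotonicity on $\{t:\zetam{t}>1\}$ directly from the slope inequality, and invoke \Href{Proposition}{prp:hypotenuse_mcox} for the uniformly convex case. The only cosmetic difference is that you establish two-sided $1$-Lipschitz continuity from the reverse triangle inequality, whereas the paper combines monotonicity with the one-sided bound $\zetam{t_2}\leq\zetam{t_1}+t_2-t_1$; these are equivalent.
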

\begin{proof}
Passing to the infimums in the leftmost inequality of \Href{Lemma}{lem:monotonicity_zeta}, we get the desired inequality. 

 The monotonicity of $\zetam{\cdot}$ on $[0, \infty)$ follows, as well as the strict monotonicity on the Lebesgue level-set  $\braces{t \st \zetam{t} > 1}.$ 
By \Href{Proposition}{prp:hypotenuse_mcox}, the Lebesgue level-set  $\braces{t \st \zetam{t} > 1}$ coincides with $(0, \infty)$ in the case when $X$ is uniformly convex. 

Passing to the infimums in the leftmost inequality of \Href{Lemma}{lem:monotonicity_zeta} and by monotonicity, 
 \[
 \zetam{t_1} \leq \zetam{t_2} \leq \zetam{t_1} + t_2 - t_1.
 \]
The continuity of $\zetam{\cdot}$ follows.
\end{proof}

\begin{proof}[Proof of \Href{Lemma}{lem:bump_sequence}]
By \Href{Corollary}{cor:monotonicity_zetam}, the function $t \zetam{t}$ is strictly monotone and continuous on $[0,1]$ in any normed space. Also, $ t \zetam{t}\mid_{t=0} = 0$ and
$ t \zetam{t}\mid_{t=1} \geq t \mid_{t=1} = 1.$ 
It follows that there is always a unique solution to the equation
$t \zetam{t} = r$ for any $r \in (0,1].$ That is, the sequence $\{r_k\}$ is well-defined. 

Assume the space $X$ is uniformly convex.  Then, the strict monotonicity of $\zetam{\cdot},$ obtained in  \Href{Corollary}{cor:monotonicity_zetam}, implies that $\{r_k\}$ is a strictly monotonically decreasing null sequence.

Now, assume inequality \eqref{eq:mcox_power_type} holds. Then, by \Href{Proposition}{prp:hypotenuse_mcox} and by the monotonicity of $\{r_k\}$, we have:
\begin{equation}
\label{eq:zeta_polynomial}
\zetam{r_{j+1}} \geq 1 + \frac{C_X}{2^q} r_{j+1}^{q} \quad \text{for all} \quad j \in \N. 
\end{equation}

We use induction on $j$ to prove the inequality $r_j \leq \tilde{C}_r j^{-\frac{1}{q}}$. Clearly, $r_1  \leq \tilde{C}_r$. The step from $j$ to $j + 1$ follows from routine analysis. 
By the monotonicity, it suffices to show 
\[
\tilde{C}_r  {(j+1)}^{-\frac{1}{q}} \zetam{\tilde{C}_r  {(j+1)}^{-\frac{1}{q}}} \geq 
\tilde{C}_r  {j}^{-\frac{1}{q}},
 \]
 which is equivalent to
 \begin{equation}
\label{eq:r_k_comp}
 \zetam{\tilde{C}_r  {(j+1)}^{-\frac{1}{q}}} \geq 
 \parenth{1 +\frac{1}{j}}^{\frac{1}{q}}
\end{equation}
Using \eqref{eq:zeta_polynomial} for the left-hand side, we get
\[
 \zetam{\tilde{C}_r  {(j+1)}^{-\frac{1}{q}}}
\geq 1 + \frac{C_X}{2^q} \frac{\tilde{C}_r^q}{j+1}.
\]

Using Bernoulli's inequality $(1 + x)^r \leq 1 + rx$ for $x> 0,$ $r \in (0,1]$ for the right-hand side of \eqref{eq:r_k_comp}, we get  
$1 + \frac{1}{qj} \geq  \parenth{1 +\frac{1}{j}}^{\frac{1}{q}}.$

Thus, inequality \eqref{eq:r_k_comp} follows from the inequality
\[
 1 + \frac{C_X}{2^q} \frac{\tilde{C}_r^q}{j+1} 
 \geq 
1 + \frac{1}{qj} 
\quad \Leftrightarrow \quad
 \tilde{C}_r \geq  
 2 \parenth{\frac{j+1}{j} \cdot \frac{1}{q C_X}}^{\frac{1}{q}} 
\]
which follows from the inequality $\tilde{C}_r \geq 2\parenth{\frac{2}{q C_X}}^{\frac{1}{q}}$ for natural $j$. The proof of \Href{Lemma}{lem:bump_sequence} is complete.
\end{proof}

\section{Which side are you on?}
\label{sec:deviation_from_hyperplane}

In this section, we study the deviations of supporting hyperplanes to the unit ball from the unit ball itself. We start by proving \Href{Lemma}{lem:deviation_from_supp_hyperplane}. Next, we will study the minimal deviation in the outward-facing half-space and obtain several technical facts \Href{Corollary}{cor:min_deviation_from_hyperplane_sequence}.  Then, we explore the interrelations between the identity for supremums of \Href{Lemma}{lem:deviation_from_supp_hyperplane} and the no-dimensional Carath\'eodory theorem.

\subsection{Proof of  \Href{Lemma}{lem:deviation_from_supp_hyperplane}}
We will need the following reduction lemma. We will say that a Banach space is \emph{strictly convex} and \emph{smooth} if the unit sphere does not contain a segment and there is exactly one supporting hyperplane at every boundary point of the unit sphere.

\begin{lem}
\label{lem:reduction_supp_h_deviation}
It suffices to prove \Href{Lemma}{lem:deviation_from_supp_hyperplane} in the case when $X$ is a two-dimensional strictly convex and smooth space.
\end{lem}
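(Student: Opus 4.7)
The plan is to carry out the reduction in two stages: first, reducing from an arbitrary Banach space to a $2$-dimensional subspace (by slicing), and second, reducing a general $2$-dimensional normed space to one that is both strictly convex and smooth (by approximation). I focus on the infimum identity in \Href{Lemma}{lem:deviation_from_supp_hyperplane}; the supremum identity is handled in exactly the same way, with $z_0 \in H^-$ satisfying $\norm{x - z_0} \le \rho$ playing the role of $y_0$.

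\emph{Slicing to 2 dimensions.} Since $H \subseteq H^+$, the inequality $\inf_{y \in H^+,\, \norm{x-y} \ge \rho} \norm{y} \le \inf_{y \in H,\, \norm{x-y} = \rho} \norm{y}$ is immediate. For the converse, fix any $y_0 \in H^+$ with $\norm{x - y_0} \ge \rho$ and choose a $2$-dimensional subspace $V \subseteq X$ containing both $x$ and $y_0$ (taking any $2$-dimensional subspace through $x$ in the degenerate case $y_0 \in \R x$, which is trivial to check on the line $\R x$ directly). Endowing $V$ with the restricted norm, $V \cap \ball{}$ is its unit ball, and $V \cap H$ is a line through $x$ supporting $V \cap \ball{}$ at $x$, because $\ball{} \subseteq H^-$ forces $V \cap \ball{} \subseteq V \cap H^-$. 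Thus $(V, x, V \cap H, \rho)$ is an instance of the lemma in a $2$-dimensional space. If the $2$-dimensional case is known, there is $y' \in V \cap H \subseteq H$ with $\norm{x - y'} = \rho$ and $\norm{y'} \le \norm{y_0}$; taking the infimum over $y_0$ produces the desired inequality.

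\emph{Smoothing in 2 dimensions.} Let $(V, \norm{\cdot})$ be a $2$-dimensional normed space with data $(x, H, \rho)$, and let $f \in V^*$ be the norming functional with $f(x)=1$ and $\norm{f}_* = 1$, so $H = f^{-1}(1)$. I would approximate $\norm{\cdot}$ by strictly convex and smooth norms $\norm{\cdot}_n$ with the following properties: they converge to $\norm{\cdot}$ uniformly on compact sets of $V$; the functional $f$ remains a supporting functional of the unit ball $B_n$ of $\norm{\cdot}_n$ at a unique point $x_n$; and $x_n \to x$. A concrete construction proceeds by fixing an auxiliary inner product on $V$ with unit disk $E$, defining $B_n$ as a Euclidean fattening of $B$ by $1/n$ capped by the half-plane $\{y : f(y) \le 1 + 1/n\}$ and then rescaled, and finally mollifying near the cap by a $C^\infty$ convolution whose support stays away from a fixed collar around $x$. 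Applying the $2$-dimensional strictly convex smooth case of the lemma in $(V, \norm{\cdot}_n)$ to $x_n$ and to the unique supporting line $H_n$ at $x_n$ (which is parallel to $H$ and converges to it), one obtains the identity for each $n$. Because all infima are taken over sets bounded uniformly in $n$ in any fixed norm on $V$, they reduce to infima over compact sets, and uniform convergence of the $\norm{\cdot}_n$ together with $x_n \to x$ and $H_n \to H$ lets me pass to the limit $n \to \infty$ and recover the identity for the original norm on $V$.

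\emph{The main obstacle} lies in the second stage: the smoothing must preserve $f$ as a supporting functional at a point near $x$, so that the new unique supporting line of $B_n$ converges to the specified $H$ and not to some other supporting line of $B$ at $x$ (of which there may be several when $B$ is not smooth at $x$). Once this is arranged, the passage to the limit is routine, because the constraint sets are closed and all relevant infima are continuous with respect to uniform convergence of the norm on compact sets, convergence of the base point $x$, and convergence of the supporting hyperplane $H$.
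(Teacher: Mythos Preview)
Your two-stage reduction---slice to a $2$-plane, then approximate by a strictly convex and smooth norm---is exactly the strategy the paper uses. The slicing stage is carried out in the paper in one sentence and matches your argument precisely.

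The smoothing stage, however, is handled differently and more economically in the paper, and this is where your proposal has a soft spot. You want to build approximating norms $\norm{\cdot}_n$ that \emph{preserve the specific supporting functional $f$} at a point $x_n\to x$, apply the lemma for each $n$, and pass to the limit. You correctly identify the construction as the main obstacle, and your suggested ``fatten, cap, rescale, mollify'' recipe is not quite a construction: the cap introduces a flat arc that mollification alone does not render strictly convex, and you have not argued why the supporting line at $x_n$ is parallel to $H$. These points can be repaired, but they are genuine gaps as written.

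The paper bypasses this entirely by arguing contrapositively. It takes \emph{any} strictly convex smooth norm $\norm{\cdot}_\tau$ with $\norm{\cdot}_\tau\le\norm{\cdot}\le(1+C\tau)\norm{\cdot}_\tau$ (citing a standard approximation result, no special structure preservation needed), takes $H_\tau$ to be the line \emph{parallel to $H$} supporting the $\tau$-ball, and observes: if in the original space the minimum over $\{y\in H^+:\norm{x-y}=\rho\}$ were attained at some $\tilde y$ in the interior of $H^+$ with $\norm{\tilde y}$ strictly below the two values on $H$, then by continuity the same strict inequality persists in $(X,\norm{\cdot}_\tau)$ for small $\tau$. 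Thus a counterexample in a general $2$-dimensional space would yield one in a strictly convex smooth space. This avoids both your construction problem and the need to pass infima through a limit---only a single strict inequality needs to survive a small perturbation.
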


\begin{proof}
The identities in \Href{Lemma}{lem:deviation_from_supp_hyperplane} follow if they hold for every two-dimensional plane passing through $x$. Thus, it suffices to consider the case when $X$ is two-dimensional.

In the two-dimensional case, the infimum and the supremum can be replaced by the minimum and the maximum by standard compactness and inclusion arguments:
\begin{equation}
\label{eq:infsup_to_minmax}
\inf\limits_{
y \in H^{+}  \st \norm{x-y} \geq \rho} \norm{y}
=
\min\limits_{
y \in H^{+}  \st \norm{x-y} = \rho} \norm{y} 
\quad \text{and} \quad 
\sup\limits_{
z \in H^{-}  \st \norm{x-z} \leq \rho} \norm{z}
=
\max\limits_{
z \in H^{-}  \st \norm{x-z} = \rho} \norm{z}.
\end{equation}

The unit ball of the norm $\norm{\cdot}$ can be approximated by the unit ball in a strictly convex and smooth space with any desired precision (for example, see \cite[Theorem 2.7.1]{schneider2014convex}).
 Consider a strictly convex and smooth space $\parenth{X, \norm{\cdot}_\tau}$ whose norm satisfies 
\begin{equation}
\label{eq:mcox_hyperplane_bound_equiv_norms}
\norm{a}_\tau \leq \norm{a} \leq (1+C \tau )\norm{a}_\tau
\end{equation}
holds for all $a \in X, \tau > 0$ and some positive constant $C$. 

The leftmost inequality in \eqref{eq:mcox_hyperplane_bound_equiv_norms} implies that the unit ball of $\parenth{X, \norm{\cdot}_\tau}$ contains the unit ball of $X.$
We use $H_{\tau}$ to denote the line parallel to $H$ contained in $H^{+}$ and supporting the unit ball of the norm $\norm{\cdot}_\tau$.   Let $x_\tau$ be the point at which $H_\tau$ supports the unit ball of the norm $\norm{\cdot}_\tau$. Let $v_\tau$ and $w_\tau$ be points on $H_\tau$ with the property $\norm{x_\tau - v_\tau}_\tau = \norm{x_\tau - w_\tau}_\tau =  \rho$.

Suppose the minimum in \eqref{eq:infsup_to_minmax} is not attained on the supporting line $H$, i.e., there is a $\tilde{y}$ in the interior of $H^+$ such that $\norm{\tilde{y} - x} = \rho$ and 
\begin{equation}
\label{eq:min_mcox_hyperplane_bound}
\norm{\tilde{y}}  < \min\{\norm{v}, \norm{w}\},
\end{equation}
where $v$ and $u$ belong to $H$ and satisfy $\norm{x - v} = \norm{x - w} = \rho$. Let $y_\tau$ be a vector of the form $ \lambda \tilde{y}$, with $\lambda > 0$, such that $\norm{x_\tau - y_\tau}_\tau = \rho$. Using the equivalence of the norms \eqref{eq:mcox_hyperplane_bound_equiv_norms}, $x_{\tau}, w_\tau, v_\tau,$ and $y_\tau$ change continuously in $\tau$. Hence, for sufficiently small $\tau$, the inequality $\norm{y_\tau}_\tau < \min\{\norm{v_\tau}_\tau, \norm{w_\tau}_\tau\}$ holds. Thus, if \eqref{eq:min_mcox_hyperplane_bound} does not hold, it does not hold in some two-dimensional smooth space.

Similarly, if there is a point $\tilde{z}$ in the interior of $H^-$ such that $\norm{\tilde{z} - x} = \rho$ and 
\[
\norm{\tilde{z}}  > \max\{\norm{v}, \norm{w}\},
\]
then such an example exists in some two-dimensional strictly convex and smooth space. 
The lemma follows.
\end{proof}

\begin{proof}[Proof of \Href{Lemma}{lem:deviation_from_supp_hyperplane}]
Using \Href{Lemma}{lem:reduction_supp_h_deviation}, we assume that $X$ is two-dimensional, strictly convex, and smooth. We will use a simple geometric observation: if two balls are supported by a hyperplane at a point $p$ in a strictly convex and smooth Banach space, then the centers of the balls and the point $p$ must lie on the same line.

First, assume the infimum is attained at some $\tilde{y}$ in the interior of $H^{+}$. In a smooth space, this implies that the balls $\norm{\tilde{y}} \ball{}$ and $\rho \ball{} + x$ have the same supporting hyperplane at $\tilde{y}$. Since $\tilde{y} \in H^{+}$, $\norm{\tilde{y}} \geq \norm{x} = 1$, and consequently, $x \in \norm{\tilde{y}} \ball{}$. It follows that $x$ belongs to the line passing through the origin and $\tilde{y}$. Thus, $\norm{\tilde{y}} = 1 + \rho$, which is the maximum norm of a vector from the set $\braces{y \in H^{+}  \st \norm{x-y} = \rho}$ by the triangle inequality. The identity for the infimum in \Href{Lemma}{lem:deviation_from_supp_hyperplane} follows.

Now, assume the supremum is attained at some $\tilde{z}$ in the interior of $H^{-}$. In a smooth space, this implies that the balls $\norm{\tilde{z}} \ball{}$ and $\rho \ball{} + x$ have the same supporting hyperplane at $\tilde{z}$. If $\rho \leq 1$, then the origin is not in the ball $\rho \ball{} + x$. By convexity, this means that $\norm{\tilde{z}} = 1 - \rho$. If $\rho > 1$, then the origin is in the ball $\rho \ball{} + x$. By convexity, this means that $\norm{\tilde{z}} = \rho - 1$. In both cases, $\enorm{\rho - 1}$ is the minimal norm of a vector from the set $\braces{z \in H^{-}  \st \norm{x-z} = \rho}$. The identity for the supremum follows.

This completes the proof of \Href{Lemma}{lem:deviation_from_supp_hyperplane}.
\end{proof}

\subsection{The minimal deviation in the outward-facing half-space}
\label{subsec:min_dev}

In the current subsection, we will derive  \Href{Corollary}{cor:min_deviation_from_hyperplane_sequence} from \Href{Lemma}{lem:deviation_from_supp_hyperplane}.

We start by proving using \Href{Lemma}{lem:deviation_from_supp_hyperplane} the following more general version of \Href{Corollary}{cor:min_deviation_from_hyperplane_sequence}:

\begin{lem}
\label{lem:min_deviation_from_hyperplane}
Assume that strictly positive numbers $\rho$ and  $\rho_L,$ two convex sets $K$ and $L$, and points $x$ and  $y$ in a normed space satisfy the following conditions:
\begin{enumerate}
\item $y$ is a nearest point from $x$ to $K$ and
$\dist{x}{K} = \norm{x - y} = \rho;$ 
\item $\dist{y}{K \cap L}  > \rho_L;$
\end{enumerate}
Then $\dist{x}{K \cap L} \geq  \rho \cdot \zetam{\frac{\rho_L}{\rho}}$. Moreover, if  $X$ is uniformly convex, the inequality is strict. 
\end{lem}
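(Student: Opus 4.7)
The plan is to reduce the statement, via an affine rescaling, to a direct application of \Href{Lemma}{lem:deviation_from_supp_hyperplane} at a supporting hyperplane through $y$. First I would use that, since $y$ is the nearest point of $K$ from $x$, the open ball of radius $\rho$ about $x$ is disjoint from the convex set $K$; Hahn--Banach then produces a closed hyperplane $H$ through $y$ with $x + \rho\ball{} \subseteq H^{-}$ and $K \subseteq H^{+}$, in the orientation convention of \Href{Lemma}{lem:deviation_from_supp_hyperplane}. Translating by $-x$ and dilating by $1/\rho$, the point $v := (y-x)/\rho$ satisfies $\norm{v}=1$, the image $H_0$ of $H$ supports $\ball{}$ at $v$, and every $z \in K \cap L$ maps to a point $w := (z-x)/\rho \in H_0^{+}$ with $\norm{w-v} = \norm{z-y}/\rho > \rho_L/\rho$, by the hypothesis $\dist{y}{K \cap L} > \rho_L$.

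Next I would apply \Href{Lemma}{lem:deviation_from_supp_hyperplane} at $H_0$ and $v$ with parameter $\rho_L/\rho$ to obtain
\[
\inf\limits_{w \in H_0^{+}\st \norm{v-w} \geq \rho_L/\rho} \norm{w} = \inf\limits_{w \in H_0 \st \norm{v-w} = \rho_L/\rho} \norm{w}.
\]
Every $w \in H_0$ with $\norm{w-v}=\rho_L/\rho$ can be written $v + (\rho_L/\rho)e$ for some unit vector $e$ quasi-orthogonal to $v$, because the direction of $H_0$ is precisely the kernel of a norming functional at $v$; hence the right-hand side is bounded below by $\zetam{\rho_L/\rho}$ by the very definition of $\zetam{\cdot}$. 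Chaining this lower bound with $\norm{w} \geq $ (left-hand side) for each scaled image of $K \cap L$, and then rescaling by $\rho$, gives the required inequality $\dist{x}{K\cap L} \geq \rho \cdot \zetam{\rho_L/\rho}$.

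For the strict form when $X$ is uniformly convex, note that $\dist{y}{K \cap L} > \rho_L$ yields some $\delta > 0$ with $\norm{z-y} \geq \rho_L + \delta$ for every $z \in K \cap L$; rerunning the same argument with $\rho_L + \delta$ in place of $\rho_L$ produces $\dist{x}{K\cap L} \geq \rho \cdot \zetam{(\rho_L+\delta)/\rho}$, and the strict monotonicity of $\zetam{\cdot}$ on $[0,\infty)$ proved in \Href{Corollary}{cor:monotonicity_zetam} closes the gap. The only delicate step I expect is the orientation bookkeeping at the Hahn--Banach separation, namely checking that $K$, and hence $K \cap L$, lands in the ``outward'' half-space $H_0^{+}$ opposite the origin after the translation and dilation so that \Href{Lemma}{lem:deviation_from_supp_hyperplane} applies as stated; all the geometric substance is already absorbed by that lemma, and everything else is routine.
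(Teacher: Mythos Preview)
Your proposal is correct and follows essentially the same route as the paper. The only cosmetic difference is that the paper passes to the two-dimensional span of $y$ and an arbitrary $z\in K\cap L$ and finds a supporting line there, whereas you invoke Hahn--Banach once in the full space to get a supporting hyperplane through $y$; in both cases one then feeds the resulting support into \Href{Lemma}{lem:deviation_from_supp_hyperplane} and bounds the hyperplane infimum from below by $\zetam{\rho_L/\rho}$, with strictness coming from the monotonicity in \Href{Corollary}{cor:monotonicity_zetam}.
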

\begin{proof}
It is nothing to prove if the intersection ${K \cap L}$ is empty. Assume ${K \cap L}$ is non-empty.
Since $\dist{x}{K \cap L} \geq \dist{x}{K},$  the desired non-strict inequality follows in the case 
$\zetam{\frac{\rho_L}{\rho}} = 1.$ 

So, consider the case $\zetam{\frac{\rho_L}{\rho}} >1.$
Shift the picture so that $x$ is at the origin, and scale it by $\frac{1}{\rho}$. 
Consider arbitrary $z \in K \cap L.$ 
Consider the two-dimensional plane $H_2$ spanned by $y$ and $z$ (if $y$ and $z$ belong to a single line, choose as $H_2$  any two-dimensional plane passing through this line ).

By a simple separation argument and convexity, there exists a line $\ell$ supporting the unit ball of $\parenth{H_2, \norm{\cdot}}$ at $y$ and separating $x$ and $z$ in $H_2$.  Hence, \Href{Lemma}{lem:deviation_from_supp_hyperplane} and the definition of $\zetam{\cdot}$ yield:
\[
\norm{z} \geq  \rho \cdot \zetam{\frac{\norm{z - y}}{\rho}}.
\]
Using \Href{Corollary}{cor:monotonicity_zetam} twice, we get 
 $\dist{x}{K \cap L} \geq  \rho \cdot \zetam{\frac{\dist{y}{K \cap L}}{\rho}} > \rho \cdot \zetam{\frac{\rho_L}{\rho}} $.
 
 In the uniformly convex case, $\zetam{\frac{\rho_L}{\rho}} > 1$ by  \Href{Corollary}{cor:monotonicity_zetam}.
Thus, the desired strict inequality follows. 
\end{proof}

Note that in the Euclidean case, $\zetam{\e} = \sqrt{1 + \e^2}$ and the bound in  \Href{Lemma}{lem:min_deviation_from_hyperplane} is $\sqrt{\rho_K^2 + \rho_L^2}$. Clearly, the bound is optimal.

\begin{proof}[Proof of \Href{Corollary}{cor:min_deviation_from_hyperplane_sequence}]
Applying \Href{Lemma}{lem:min_deviation_from_hyperplane}, we have:
\begin{equation}
\label{eq:main_banach_space_ineq}
\dist{0}{K \cap L} > \norm{p} \zetam{\frac{1}{\norm{p}}}.
\end{equation}
If $\norm{p} \geq \frac{1}{r_{j}},$ then $\dist{0}{K \cap L} > \frac{1}{r_j}.$
Consider the case $\norm{p} < \frac{1}{r_{j}}$ or, equivalently,
$\frac{1}{\norm{p}} > {r_{j}}.$
Then, from \eqref{eq:main_banach_space_ineq}, one gets:
\[
\dist{0}{K \cap L} > \norm{p} \zetam{\frac{1}{\norm{p}}} =
\norm{p} \parenth{\zetam{\frac{1}{\norm{p}}} -1} + \norm{p} 
{\geq} 
\frac{\zetam{r_j} - 1}{r_j} +\norm{p} ,
\]
where the last inequality follows from \Href{Corollary}{cor:monotonicity_zetam} and the assumption  $\frac{1}{\norm{p}} > {r_{j}}.$ 
By \Href{Lemma}{lem:bump_sequence},  $r_{j-1} > r_j$ in a uniformly convex space. 
Hence, $\frac{1}{r_j} > \frac{1}{r_{j-1}};$ also by assertion of the corollary
$\norm{p} > \frac{1}{r_{j-1}}.$  Using this inequality, one gets
\[
\dist{0}{K \cap L} > \frac{\zetam{r_j} - 1}{r_j} +\norm{p} > 
\frac{\zetam{r_j} - 1}{r_{j-1}} + 
\frac{1}{r_{j-1}} = \frac{\zetam{r_j}}{r_{j-1}} = \frac{1}{r_{j}}.
\]
\end{proof}

\subsection{The maximal deviation in the inward-facing half-space}
Our goal is to show that the identity involving the supremums in \Href{Lemma}{lem:deviation_from_supp_hyperplane} is essentially the key tool in the proof of the no-dimensional Carath\'eodory theorem.

From the statement of \Href{Lemma}{lem:deviation_from_supp_hyperplane}, it is clear that the infimum can be easily bounded by our modified ``modulus of convexity'' $\zetam{\cdot}$. The supremum can be bounded by the function $\zetap{\cdot}: [0,\infty) \to [0,\infty)$, defined as follows:
\[
\zetap{\e} =  
\sup\braces{\norm{x + \e y} \st \norm{x} = \norm{y} = 1, y \urcorner x}.
\]
This function was studied in \cite{ivanov2017new}. In particular, it was shown that $\zetap{\cdot} - 1$ is equivalent to the so-called \emph{modulus of smoothness} $\mglx{\cdot}: [0,\infty) \to [0,\infty)$ of a Banach space, defined as:
\begin{equation*}
\mglx{\tau} = \sup \left\{\frac{\norm{x + y} + \norm{x - y}}{2} - 1 \st \, \norm{x} = 1, \norm{y} = \tau \right\}.
\end{equation*}
The following result was shown in \cite[Corollary 2]{ivanov2017new}:
\begin{prp}
\label{prp:hypotenuse_mglx}
Let $X$ be an arbitrary Banach space. Then the functions $\zetap{\cdot} - 1$
and $ \mglx{\cdot}$ are equivalent, and the following inequalities hold:
$$
\mglx{\frac{\e}{4}}  \leq \zetap{\e} - 1 \leq \mglx{2\e}, \quad  \e \in \left[0, \frac{1}{2}\right].
$$
\end{prp}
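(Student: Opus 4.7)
The plan is to prove both inequalities directly from the definitions, using the equivalent characterization of quasi-orthogonality that $y \urcorner x$ iff $\norm{x + \lambda y} \geq \norm{x}$ for all $\lambda \in \R$. The strategy mirrors the companion bound for $\zetam{\cdot}$ and $\mcox{\cdot}$ in \Href{Proposition}{prp:hypotenuse_mcox}.

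For the upper bound $\zetap{\e} - 1 \leq \mglx{2\e}$, I take unit vectors $x, y$ with $y \urcorner x$ and apply the definition of $\mglx$ to the pair $(x, \e y)$ of norms $1$ and $\e$:
\[
\frac{\norm{x + \e y} + \norm{x - \e y}}{2} \leq 1 + \mglx{\e}.
\]
Quasi-orthogonality yields $\norm{x - \e y} \geq 1$, so $\norm{x + \e y} \leq 1 + 2\mglx{\e}$. Taking the supremum over admissible $(x,y)$ gives $\zetap{\e} - 1 \leq 2\mglx{\e}$. Since $\mglx$ is convex and vanishes at $0$, the ratio $t \mapsto \mglx{t}/t$ is non-decreasing, hence $2 \mglx{\e} \leq \mglx{2\e}$.

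For the lower bound $\mglx{\e/4} \leq \zetap{\e} - 1$ on $\e \in [0, 1/2]$, I perform a quasi-orthogonal decomposition. Set $\tau = \e/4$ and take $x, y$ with $\norm{x} = 1$ and $\norm{y} = \tau$. Let $f$ be a norming functional for $x$, and write $y = f(y)\, x + y'$, with $y' = y - f(y) x \in \ker f$, so $y'$ is quasi-orthogonal to $x$. The triangle inequality gives $|f(y)| \leq \tau$ and $\norm{y'} \leq 2\tau$. Put $a = 1 + f(y)$ and $b = 1 - f(y)$, so $a + b = 2$ and, when $\tau \leq 1/8$, both $a, b \in [7/8, 9/8]$. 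Then
\[
\norm{x + y} = a \cdot \norm{x + y'/a}, \qquad \norm{x - y} = b \cdot \norm{x - y'/b}.
\]
Since $\pm y'/a$ and $\pm y'/b$ are quasi-orthogonal to $x$ with norms at most $2\tau/(1 - \tau) \leq 4\tau = \e$, the definition of $\zetap$ combined with its monotonicity on $[0, \infty)$ yields $\norm{x + y'/a}, \norm{x - y'/b} \leq \zetap{\e}$. Summing the two bounds gives
\[
\frac{\norm{x + y} + \norm{x - y}}{2} \leq \frac{a + b}{2}\, \zetap{\e} = \zetap{\e},
\]
and supremizing over admissible $(x,y)$ produces the desired inequality.

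The one auxiliary fact used is the monotonicity of $\zetap$: for $0 < s < t$ and unit $v \urcorner x$, the convex combination $x + sv = (1 - s/t) x + (s/t)(x + tv)$ and convexity of the norm give $\norm{x + sv} \leq (1 - s/t) + (s/t)\zetap{t} \leq \zetap{t}$ (the last step using $\zetap{t} \geq 1$), so $\zetap{s} \leq \zetap{t}$ upon passing to the supremum. I expect the only real subtlety in the whole argument to be setting up the scaling in the decomposition so that the norms of $y'/a$ and $y'/b$ are bounded by $\e$; once this is in place, both inequalities reduce to routine calculation.
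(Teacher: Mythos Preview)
The paper does not prove this proposition; it is quoted verbatim from \cite[Corollary~2]{ivanov2017new}, so there is no ``paper's own proof'' to compare against. Your argument is correct and self-contained. The upper bound is immediate from the definition of $\mglx{\cdot}$ plus the quasi-orthogonality lower bound $\norm{x-\e y}\geq 1$, and the doubling step $2\mglx{\e}\leq\mglx{2\e}$ is legitimate because $\mglx{\cdot}$ is a pointwise supremum of functions of the form $\tau\mapsto\tfrac{1}{2}(\norm{x+\tau u}+\norm{x-\tau u})-1$, hence convex with $\mglx{0}=0$. For the lower bound, your decomposition $y=f(y)x+y'$ with $y'\in\ker f$ is exactly the right idea; the bookkeeping $a,b\geq 1-\tau$ and $\norm{y'}\leq 2\tau$ gives $\norm{y'/a},\norm{y'/b}\leq 2\tau/(1-\tau)\leq 4\tau=\e$ on the stated range, and the monotonicity of $\zetap{\cdot}$ (which you justify correctly via convexity of the norm along the line $x+\R v$ together with $\zetap{\cdot}\geq 1$) then closes the estimate. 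One cosmetic remark: when $y'=0$ the normalization $u=y'/\norm{y'}$ is undefined, but in that case $\norm{x\pm y'/a}=1\leq\zetap{\e}$ trivially, so the argument still goes through.
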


A Banach space $X$ is called \emph{uniformly smooth} if $\mglx{t} = o(t)$ as $t \to 0$. It is known that uniform smoothness is equivalent to the uniform differentiability of the norm.

As another direct corollary of \Href{Lemma}{lem:deviation_from_supp_hyperplane}, we obtain the following:
\begin{cor}
\label{cor:max_deviation_via_mglx}
Let a continuous linear functional $p$ attain its norm on a non-zero vector $x$. Denote by $H$ the hyperplane on which $p$ vanishes. Let $H^{-}$ denote the half-space with boundary $H$ that does not contain $x$. Then for any $z \in H^{-}$, the inequality
\[
\norm{x + z} \leq \norm{x} \zetap{\frac{\norm{z}}{\norm{x}}}
\]
holds.
\end{cor}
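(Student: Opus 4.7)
The plan is to reduce to the case $\norm{x} = 1$ by homogeneity, then invoke the second (supremum) identity of \Href{Lemma}{lem:deviation_from_supp_hyperplane} to replace the supremum over the inward-facing half-space by one over the supporting hyperplane itself, and finally recognize the resulting quantity as essentially the definition of $\zetap{\cdot}$.

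First, I would scale. Since both sides of the desired inequality are positively $1$-homogeneous in the pair $(x, z)$, and since $H$, $H^{-}$, together with the hypothesis ``$p$ attains its norm on $x$'', are all scale-invariant, I may assume $\norm{x} = 1$. Under this normalization, the affine hyperplane $x + H = \braces{y \st p(y) = p(x)}$ is a supporting hyperplane to the unit ball $\ball{}$ at $x$, and $x + H^{-}$ is the affine half-space that contains $\ball{}$, since $p(u) \leq \norm{p} = p(x)$ for every $u \in \ball{}$ (after an innocuous sign convention on $p$).

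Setting $\rho = \norm{z}$ and $y = x + z \in x + H^{-}$, the second identity of \Href{Lemma}{lem:deviation_from_supp_hyperplane}, applied to the unit ball with supporting hyperplane $x + H$ at $x$, yields
\[
\norm{x + z} \leq \sup\braces{\norm{y} \st y \in x + H^{-},\ \norm{y - x} \leq \rho} = \sup\braces{\norm{x + h} \st h \in H,\ \norm{h} = \rho}.
\]
For any such $h$, the unit vector $h/\rho$ lies in $\ker p$ while $p$ attains its norm on $x$, so $h/\rho$ is quasi-orthogonal to $x$ in the sense recalled in \Href{Section}{sec:moduli_Banach_spaces}. By the definition of $\zetap{\cdot}$ this gives $\norm{x + h} \leq \zetap{\rho} = \zetap{\norm{z}}$. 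Chaining the two inequalities and then unscaling (multiplying by $\norm{x}$) produces the claimed bound $\norm{x + z} \leq \norm{x} \zetap{\norm{z}/\norm{x}}$.

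I do not anticipate any serious obstacle: once the scaling is fixed and the lemma is applied at the correct point $x$, the conclusion is just a matching between the supremum supplied by the lemma and the supremum defining $\zetap{\cdot}$. The only point requiring a little care is to distinguish the \emph{linear} hyperplane $H$, whose unit vectors are the quasi-orthogonal directions to $x$, from the \emph{affine} supporting hyperplane $x + H$ appearing in the lemma; the passage between the two is built into the identity of \Href{Lemma}{lem:deviation_from_supp_hyperplane} via the substitution $y = x + h$.
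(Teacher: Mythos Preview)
Your proof is correct and follows essentially the same route as the paper's: both use the supremum identity of \Href{Lemma}{lem:deviation_from_supp_hyperplane} to reduce to the case where $z$ is quasi-orthogonal to $x$, after which the bound is immediate from the definition of $\zetap{\cdot}$. You simply spell out the homogeneity reduction and the affine-versus-linear hyperplane bookkeeping that the paper leaves implicit in its two-line argument.
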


\begin{proof}
By \Href{Lemma}{lem:deviation_from_supp_hyperplane}, it suffices to consider the case when $z$ is quasi-orthogonal to $x$. Then, the desired inequality follows directly from the definition of $\zetap{\cdot}$.
\end{proof}

This corollary improves upon Lemma 3.1 in \cite{ivanov2021approximate}, where $\norm{x + z}$ is bounded in terms of the modulus of smoothness of the space.

Let us formulate the no-dimensional Carath\'eodory theorem in a similar way to our formulation of \Href{Theorem}{thm:no-dim_Helly_Banach}.

We will say that a \emph{sequence $\braces{R_k(X)}$ satisfies the no-dimensional Carath\'eodory theorem} in a normed space $X$ if for an arbitrary subset $S$ of the unit ball containing the origin in its convex hull, there is a sequence $\{x_i\}_1^\infty \subset S$ satisfying the inequality
\[
\norm{\frac{x_1 + \dots + x_k}{k}} \leq \frac{R_k(X)}{k}.
\]

The main result of \cite{ivanov2021approximate} was as follows:
\def\Const{4e^2}
\begin{enumerate}
\item In a uniformly smooth Banach space $X$, the sequence 
\begin{equation}
\label{eq:R_k_mglx}
R_k(X) = \frac{\Const}{k \rmglx{1/k}}
\end{equation}
 satisfies the no-dimensional Carath\'eodory theorem.
\item A description of an explicit greedy algorithm for finding the desired sequence $\{x_i\}_1^\infty$ for any set $S$.
\end{enumerate}

Let us sketch the proof of the no-dimensional Carath\'eodory theorem using \Href{Corollary}{cor:max_deviation_via_mglx} and the same greedy algorithm as in 
\cite{ivanov2021approximate}.

For a fixed normed space $X$, the sequence $\{R_j\}$ is defined inductively:
\begin{itemize}
\item $R_1 =1;$
\item for all natural $j$, $R_{j+1}$ is defined as the unique solution to 
\begin{equation*}
 \frac{R_{j+1}}{ \zetap{\frac{1}{R_{j+1} - 1}}}= {R_j}.
\end{equation*}
\end{itemize}

\begin{prp}
\label{prp:no-dim_caratheodory}
In a uniformly smooth Banach space $X$, the sequence 
$\{R_k\}$
satisfies the no-dimensional Carath\'eodory theorem.
\end{prp}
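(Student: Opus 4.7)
The plan is to run the greedy construction from \cite{ivanov2021approximate}, replacing the modulus-of-smoothness estimate used there by the sharper inequality of \Href{Corollary}{cor:max_deviation_via_mglx}. I would set $s_0 = 0$ and build $x_1, x_2, \dots \in S$ inductively so that the partial sums $s_k = x_1 + \dots + x_k$ satisfy $\norm{s_k} \leq R_k$ for every $k$; the conclusion of the proposition is then immediate.

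At step $k+1$, given $s_k \neq 0$, I would pick via Hahn--Banach a norming functional $p$ with $\norm{p} \leq 1$ and $p(s_k) = \norm{s_k}$; then the half-space $H^{-} := \braces{y \st p(y) \leq 0}$ is the inward-facing one at $s_k$ with respect to $\norm{s_k}\ball{}$. Since $0 \in \mathrm{conv}(S)$, averaging $p$ against a convex representation $0 = \sum \lambda_i y_i$ produces some $x_{k+1} \in S$ with $p(x_{k+1}) \leq 0$, i.e.\ $x_{k+1} \in H^{-}$. \Href{Corollary}{cor:max_deviation_via_mglx} applied to $x = s_k$ and $z = x_{k+1}$ then yields
\[
\norm{s_{k+1}} = \norm{s_k + x_{k+1}} \leq \norm{s_k}\,\zetap{\norm{x_{k+1}}/\norm{s_k}} \leq \norm{s_k}\,\zetap{1/\norm{s_k}},
\]
using $\norm{x_{k+1}} \leq 1$ and the monotonicity of $\zetap{\cdot}$, which is inherited from the convexity of $t \mapsto \norm{u + tv}$ and its minimum at $t = 0$ for $v \urcorner u$, in the spirit of \Href{Lemma}{lem:monotonicity_zeta}. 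For $s_k = 0$, the bound $\norm{s_{k+1}} \leq 1 \leq R_{k+1}$ is trivial.

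The induction on $k$ then runs via a case split that explains precisely the shift by one inside $\zetap{\cdot}$ in the recursion. The base case $\norm{s_1} \leq 1 = R_1$ is clear. Assuming $\norm{s_k} \leq R_k$, if $\norm{s_k} \geq R_{k+1} - 1$, monotonicity of $\zetap{\cdot}$ gives $\zetap{1/\norm{s_k}} \leq \zetap{1/(R_{k+1}-1)}$, hence
\[
\norm{s_{k+1}} \leq \norm{s_k}\,\zetap{1/(R_{k+1}-1)} \leq R_k\,\zetap{1/(R_{k+1}-1)} = R_{k+1}
\]
by the defining equation of $R_{k+1}$. Otherwise $\norm{s_k} < R_{k+1} - 1$, and the triangle inequality alone gives $\norm{s_{k+1}} \leq \norm{s_k} + 1 < R_{k+1}$. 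Either way $\norm{s_{k+1}} \leq R_{k+1}$, closing the induction and yielding $\norm{(x_1 + \dots + x_k)/k} \leq R_k/k$.

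The main obstacle is purely auxiliary: one must verify that the recursion $R_{j+1}/\zetap{1/(R_{j+1}-1)} = R_j$ is well-posed with $R_{j+1} > 1$ for every $R_j \geq 1$, so that the case split is meaningful. I expect this to follow from continuity and strict monotonicity of $r \mapsto r/\zetap{1/(r-1)}$ on $(1,\infty)$: it tends to $0$ as $r \to 1^{+}$ (since $\zetap{\e} \geq \e - 1 \to \infty$) and to $+\infty$ as $r \to \infty$, while uniform smoothness together with \Href{Proposition}{prp:hypotenuse_mglx} yields $\zetap{\cdot} > 1$ on $(0,\infty)$, forcing $\{R_k\}$ to be strictly increasing.
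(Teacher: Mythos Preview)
Your proposal is correct and follows essentially the same approach as the paper: the identical greedy algorithm, the key bound $\norm{s_{k+1}} \leq \norm{s_k}\,\zetap{1/\norm{s_k}}$ from \Href{Corollary}{cor:max_deviation_via_mglx}, and the same case split at the threshold $\norm{s_k} = R_{k+1}-1$ (the paper phrases the small-norm case via $\zetap{\tau}\leq 1+\tau$ rather than the bare triangle inequality, but that is the same estimate). Your added paragraph on the well-posedness of the recursion $\{R_k\}$ is a welcome detail that the paper leaves implicit.
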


\begin{proof}[Sketch of the proof] 
For an arbitrary subset $S$ of the unit ball containing the origin in its convex hull, we will construct the sequence $\{x_i\}$ satisfying the inequality 
$\norm{x_1 + \dots + x_k} \leq R_k$. 
The proof consists of two steps:
\begin{enumerate}
\item Algorithmic construction of a sequence $\{x_i\}$.
\item Estimation of the norm $\norm{{x_1 + \dots + x_k}}$.
\end{enumerate}  

The first step explicitly uses supporting hyperplanes.

We use the following \textbf{algorithm}:
\begin{enumerate}
\item $x_1$ is an arbitrary point from $S$; 
\item For the constructed sequence $\{x_1, \dots, x_{k}\}$, $k \geq 1$, set $a_k = x_1 + \dots + x_k$. We choose $x_{k+1} \in S$ such that $\iprod{x_{k+1}}{a_k^\ast} \leq 0$, where $a_k^\ast$ is a functional attaining its norm on the vector $x_1 + \dots + x_k$ (assuming $x_{k+1}$ is an arbitrary point of $S$ if $x_1 + \dots + x_k = 0$).
\end{enumerate}

The sequence $\{x_i\}_1^\infty$ is well-defined. There exists $q \in S$ such that $\iprod{q}{p} \leq 0$ for any functional $p$, since the origin is in the convex hull of $S$.

 We will use induction on $k$ to show that $\norm{a_k} \leq R_k.$ Clearly,
$\norm{a_1} \leq R_1 = 1$. Assume $\norm{a_j} \leq R_j$ for all $j \in [k].$

For the induction step from $k$ to $k+1$, the key inequality in estimating the norm is the one from \Href{Corollary}{cor:max_deviation_via_mglx}:
\[
\norm{a_{k+1}} = \norm{a_k + x_{k+1}} \leq \norm{a_k} \zetap{\frac{\norm{x_{k+1}}}{\norm{a_k}}} \leq  \norm{a_k} \zetap{\frac{1}{\norm{a_k}}}.
\]
If $\norm{a_k} \leq R_{k+1} - 1$, then 
\[
\norm{a_{k+1}} \leq \norm{a_k} \zetap{\frac{1}{\norm{a_k}}}
 \stackrel{(\ast)}{\leq}
 \norm{a_k} \parenth{1 + \frac{1}{\norm{a_k}}} = \norm{a_k} + 1 \leq R_{k+1},
\]
where in $(\ast)$ we use the trivial inequality 
$\zetap{\tau} \leq 1 + \tau.$
If $\norm{a_k} \geq R_{k+1} - 1$, then 
\[
\norm{a_{k+1}} \leq \norm{a_k} \zetap{\frac{1}{\norm{a_k}}}
 \stackrel{(\ast\ast)}{\leq}
 \norm{a_k} \zetap{\frac{1}{R_{k+1} - 1}} \leq R_k \zetap{\frac{1}{R_{k+1} - 1}}  = R_{k+1},
\]
where in $(\ast\ast)$ we use the convexity of the function $\zetap{\cdot}$, which in turn follows from \Href{Lemma}{lem:monotonicity_zeta}.
\end{proof}

We claim without proof that in the most interesting case, when the modulus of smoothness satisfies the inequality $\mglx{\tau} \leq C \tau^p$ for some $p \in (1,2]$, the sequence $R_k$ is of the same order of magnitude as the sequence given in \eqref{eq:R_k_mglx}.

{\bf Acknowledgements.}  
I would like to thank Alexander Polyanskii, who motivated me to return to no-dimensional results.

\bibliographystyle{alpha}
\bibliography{../../work_current/uvolit}

\end{document}